\documentclass[12pt,reqno]{amsart}
\usepackage[notref,notcite]{}
\usepackage{amssymb}
\usepackage{amsmath}

\overfullrule=5pt \setlength{\topmargin}{-2cm}
\setlength{\textwidth}{16.0truecm}
\setlength{\hoffset}{-1cm}
\setlength{\textheight}{24truecm}
\overfullrule=0pt

%





\setcounter{section}{0}
\setcounter{page}{1}

\newtheorem{theorem}{Theorem}[section]
\newtheorem{lemma}[theorem]{Lemma}
\newtheorem{proposition}[theorem]{Proposition}

\newtheorem{definition}[theorem]{Definition}

\newtheorem{problem}[theorem]{Problem}
\newtheorem{conjecture}[theorem]{Conjecture}

\numberwithin{equation}{section}

\newcommand{\cout}[1]{}

\begin{document}
\title[Zeta-invariants of the Steklov spectrum]{Zeta-invariants of the Steklov spectrum\\ for a planar domain}

\author{Evgeny Malkovich and Vladimir Sharafutdinov}
\thanks{The first author was supported by Russian State Support of
Researches, Grant 14.B25.31.0029.\\
\indent The work was started by the second author when he stayed at Institut Mittag-Leffler in January -- March 2013 in the scope of the program ``Inverse Problems''. The second author is grateful to the institute for the support and hospitality.}
\address{Sobolev Institute of Mathematics and Novosibirsk State University, Russia}
\email{sharaf@math.nsc.ru}
\email{malkovich@math.nsc.ru}

\begin{abstract}
The classical inverse problem of recovering a simply connected smooth planar domain from the Steklov spectrum \cite{E} is equivalent to the problem of recovering, up to a conformal equivalence, a positive function $a\in C^\infty({\mathbb S})$ on the unit circle ${\mathbb S}=\{e^{i\theta}\}$ from the eigenvalue spectrum of the operator $a\Lambda_e$, where $\Lambda_e=(-d^2/d\theta^2)^{1/2}$. We introduce $2k$-forms $Z_k(a)\ (k=1,2,\dots)$ in Fourier coefficients of the function $a$ which are called zeta-invariants. They are uniquely determined by the eigenvalue spectrum of $a\Lambda_e$.
We study some properties of $Z_k(a)$, in particular, their invariance under the conformal group. Some open questions on zeta-invariants are posed at the end of the paper.
\end{abstract}

\maketitle

\section{Introduction. Three forms of an inverse problem\\ for the Steklov spectrum}

Let ${\mathbb D}=\{(x,y)\mid x^2+y^2\leq 1\}\subset{\mathbb R}^2={\mathbb C}$ be the unit disc and ${\mathbb S}=\partial {\mathbb D}=\{e^{i\theta}\mid\theta\in{\mathbb R}\}$, the unit circle. We introduce the first order pseudodifferential operator
\begin{equation}
\Lambda_e=\sqrt{-d^2/d\theta^2}:C^\infty({\mathbb S})\rightarrow C^\infty({\mathbb S}).
                                               \label{1.1}
\end{equation}
Equivalently, the operator is defined by $\Lambda_e e^{in\theta}=|n|\,e^{in\theta}$ on elements of the trigonometric basis. For a reason explained below, $\Lambda_e$ is called the {\it Dirichlet-to-Neumann operator of the Euclidean metric} $e$ (DN-operator briefly). The eigenvalue spectrum of the operator is
$$
\mbox{Sp}(\Lambda_e)=\{0,1,1,2,2,\dots\},
$$
where each eigenvalue is repeated according to its multiplicity.

For a positive function $a\in C^\infty({\mathbb S})$, the operator $a\Lambda_e$ has also a discrete eigenvalue spectrum
$$
\mbox{Sp}(a\Lambda_e)=\{0=\lambda_0<\lambda_1\leq\lambda_2\leq\dots\}
$$
that will be called the {\it Steklov spectrum} of the operator $a\Lambda_e$. In the present article, we discuss the question: to what extent is a function $0<a\in C^\infty({\mathbb S})$ determined by the Steklov spectrum $\mbox{Sp}(a\Lambda_e)$? The problem has a natural gauge group caused by conformal and anticonformal transformations of the disc ${\mathbb D}$. Let us introduce the corresponding definition.

For a smooth map $\varphi:{\mathbb S}\rightarrow{\mathbb S}$, the derivative $d\varphi/d\theta\in C^\infty({\mathbb S})$ is defined by $\varphi^*(d\theta)=(d\varphi/d\theta)\,d\theta$.

\begin{definition} \label{D1.1}
Two functions $a,b\in C^\infty({\mathbb S})$ are said to be conformally equivalent if there exists
a conformal or anticonformal transformation $\Phi:{\mathbb D}\rightarrow {\mathbb D}$ such that
\begin{equation}
b=a\circ\varphi\left|d\varphi/d\theta\right|^{-1},\quad\mbox{where}\quad\varphi=\Phi|_{\mathbb S}.
                                               \label{1.2}
\end{equation}
\end{definition}

If functions $a$ and $b$ do not vanish, equation (\ref{1.2}) can also be written in the form
$$
d\theta/b(\theta)=\pm\varphi^*\left(d\theta/a(\theta)\right).
$$

{\bf Remark.}
We emphasize the following difference between this definition and the corres\-pon\-ding definition of \cite[Section 3]{JS}: two positive functions $a$ and $b$ are conformally equivalent in our sense if and only if the functions $1/a$ and $1/b$ are $e$-conformally equivalent in the sense of \cite{JS}. We have changed the definition according to our wish to simplify the notation $a^{-1}\Lambda_e$ to $a\Lambda_e$.
Formally speaking, the operator $a\Lambda_e$ is well defined for an arbitrary (complex-valued) function $a\in C^\infty({\mathbb S})$ and some of our results make sense in this generality; although the eigenvalue spectrum $\mbox{Sp}(a\Lambda_e)$ will be discussed only in the case of a positive function $a$.

As can be easily proved, $\mbox{Sp}(a\Lambda_e)=\mbox{Sp}(b\Lambda_e)$ for conformally equivalent positive functions $a,b\in C^\infty({\mathbb S})$, see \cite{JS}. The converse statement is still open.

\begin{conjecture} \label {C1.1}
For two positive functions $a,b\in C^\infty({\mathbb S})$, the equality
\begin{equation}
\mbox{\rm Sp}(a\Lambda_e)=\mbox{\rm Sp}(b\Lambda_e)
                                               \label{1.3}
\end{equation}
holds if and only if these functions are conformally equivalent.
\end{conjecture}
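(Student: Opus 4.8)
Conjecture \ref{C1.1} is open, so what follows is an attack plan rather than a proof. The ``if'' direction is classical and requires nothing new: if $b$ is obtained from $a$ by (\ref{1.2}) through a conformal or anticonformal $\Phi$ with $\varphi=\Phi|_{\mathbb S}$, then pullback by $\varphi$, suitably weighted, intertwines $a\Lambda_e$ with $b\Lambda_e$, so the two operators are isospectral; see \cite{JS}. Hence the real content is the converse, and the plan is to extract from $\mbox{\rm Sp}(a\Lambda_e)$ a \emph{complete} system of invariants of the conformal gauge group of Definition \ref{D1.1}.

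For positive $a\in C^\infty({\mathbb S})$ the operator $a\Lambda_e$ is positive with eigenvalues $\lambda_j\sim cj$, so its zeta function $\zeta(s)=\sum_{j\geq1}\lambda_j^{-s}$ converges for $\mathrm{Re}\,s>1$ and continues meromorphically, and the heat trace $\sum_j e^{-t\lambda_j}$ has an asymptotic expansion as $t\to0^+$; the residues and special values of $\zeta$, or equivalently the heat coefficients, are manifestly spectral invariants. Writing $a(\theta)=\sum_{n\in{\mathbb Z}}a_ne^{in\theta}$ and expanding $a\Lambda_e$ in the basis $\{e^{in\theta}\}$ (in which $\Lambda_e$ is diagonal and multiplication by $a$ is the Toeplitz-type matrix $(a_{m-n})$), one finds that, once the divergent leading pieces are subtracted, these invariants become explicit homogeneous forms of degree $2k$ in the Fourier coefficients $a_n$ — the zeta-invariants $Z_k(a)$. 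Granting their construction and their conformal invariance, Conjecture \ref{C1.1} becomes equivalent to the completeness statement
\[
Z_k(a)=Z_k(b)\ \ \text{for all }k\ \ \Longrightarrow\ \ a\ \text{and}\ b\ \text{are conformally equivalent.}
\]

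To prove completeness I would first use up the gauge freedom. The conformal and anticonformal transformations of ${\mathbb D}$ form $\mathrm{PSL}(2,{\mathbb R})$ extended by ${\mathbb Z}/2$, acting on the density $d\theta/a(\theta)$ by pullback, so three real parameters are available: move the conformal barycenter of $1/a$ to the origin (killing its first nonconstant Fourier coefficient) and use the residual rotations to make the next coefficient real, while the constant $\int_{\mathbb S}d\theta/a$ is already an invariant, read off from the Weyl asymptotics $\lambda_j\sim cj$. On this normalized slice one inverts $a\mapsto(Z_1(a),Z_2(a),\dots)$ recursively: ordered correctly, each $Z_k$ should, modulo the Fourier data already recovered, pin down the next band of coefficients of $a$ up to the leftover ${\mathbb Z}/2$; one then promotes the resulting formal reconstruction to an honest element of $C^\infty({\mathbb S})$ using elliptic regularity together with the decay information carried by the spectrum.

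The main obstacle is precisely this completeness step; it is a genuinely nonlinear inverse-spectral problem. Even its linearization is delicate: one must show that at a given $a$ the differential $(dZ_1,dZ_2,\dots)$ surjects onto the normal space of the gauge orbit, which requires controlling the intricate polynomial structure of the $Z_k$. Worse, passing from infinitesimal to global injectivity has to cope with the non-compactness of the gauge group and with possible exceptional functions $a$ at which the orbits, or the differential above, degenerate. In fact whether the zeta-invariants are complete is itself one of the open problems posed at the end of this paper, so the honest status is: the forward implication and the conformal invariance of the $Z_k$ hold unconditionally, and the converse is \emph{reduced} to — but not settled by — the completeness of the zeta-invariants.
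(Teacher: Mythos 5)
You have correctly identified the status of this statement: it is Conjecture \ref{C1.1}, which the paper explicitly leaves open (``the converse statement is still open''), so there is no proof in the paper against which your argument could be matched. What the paper actually establishes is exactly the set of pieces you describe: the ``if'' direction (conformally equivalent positive functions are isospectral, cited to \cite{JS}), the construction of the zeta-invariants $Z_k(a)$, their identification with spectral data via $Z_k(a)=\zeta_a(-2k)$ under the normalization (\ref{2.0}) (Theorem \ref{T2.1}), and their invariance under the conformal group (Proposition \ref{P3.2} and the relations of Section 4). Your plan is the same program the paper pursues, and your refusal to claim the converse is the correct conclusion; as a ``proof'' the proposal of course settles nothing beyond the classical direction.

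Two cautions about the reduction you state. First, the conjecture is not \emph{equivalent} to completeness of the zeta-invariants: completeness of the family $\{Z_k\}$ modulo the gauge group would imply the conjecture, but isospectrality gives much more than the values $\zeta_a(-2k)$ (the entire zeta function, hence all heat-trace data), so the conjecture could conceivably hold even if the $Z_k$ fail to separate conformal orbits; the implication is one-way. Second, the recursive scheme in which ``each $Z_k$ pins down the next band of coefficients'' overstates what a single real number per $k$ can deliver; note that the paper records as open even the mutual independence of the $Z_k$ beyond $k=2$ (the ``principle question'' of its last section), the positivity $Z_k(a)\geq 0$ for $k\geq 2$, and the description of the common zero set of all $Z_k$, and its authors state they are not optimistic about the conjecture in full generality. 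So the honest status is exactly where you end: the forward implication and the conformal invariance of the $Z_k$ are established, and the converse remains untouched rather than reduced to a solved problem.
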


To be honest, we are not optimistic about the validity of the conjecture in the general case. Nevertheless, there are many versions of the problem which are worth of studying even if the answer is "no" in the general case. For example, we can ask: how many positive functions $a\in C^\infty({\mathbb S})$ satisfy (\ref{1.3}) for a given $0<b\in C^\infty({\mathbb S})$? We believe that, for a generic $b$, such a function $a$ is unique up to the conformal equivalence.

\bigskip

There are two other equivalent forms of the same problem. We discuss them very briefly here. All details are presented in \cite{JS}.

Let $\Omega\subset{\mathbb R}^2$ be a simply connected domain bounded by a smooth closed curve $\partial\Omega$. The {\it Steklov spectrum} $\mbox{Sp}(\Omega)$ of the domain consists of $\lambda\in{\mathbb R}$ such that the boundary value problem
$$
\Delta u=0\quad\mbox{in}\quad\Omega,\quad\left.\partial u/\partial\nu\right|_{\partial\Omega}=-\lambda u|_{\partial\Omega}
$$
has a non-trivial solution. Here $\nu$ is the unit outward normal to the boundary. As well known, the spectrum $\mbox{Sp}(\Omega)$ is discrete and non-negative. The classical inverse problem sounds as follows: to what extent is a simply connected smooth bounded domain $\Omega\subset{\mathbb R}^2$ determined by its Steklov spectrum? Here, the natural conjecture is as follows:

\begin{conjecture} \label {C1.2}
A simply connected smooth bounded domain $\Omega\subset{\mathbb R}^2$ is determined by its Steklov spectrum uniquely up to an isometry of ${\mathbb R}^2$ endowed with the standard Euclidean metric $e$.
\end{conjecture}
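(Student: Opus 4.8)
The plan is to reduce Conjecture \ref{C1.2} to the operator-theoretic form of the problem and then to invoke Conjecture \ref{C1.1} (the reduction is carried out in detail in \cite{JS}, so I would only sketch it). Given a simply connected smooth bounded domain $\Omega$, fix a Riemann map $\Psi:{\mathbb D}\rightarrow\Omega$; since $\partial\Omega$ is smooth, $\Psi$ extends to a diffeomorphism of $\overline{\mathbb D}$ onto $\overline\Omega$ that is smooth with non-vanishing derivative on ${\mathbb S}$. Because harmonic functions are conformally invariant in the plane and the outward normal derivative at the boundary transforms by the factor $1/|\Psi'|$, the pullback $f\mapsto f\circ\Psi|_{\mathbb S}$ conjugates the Dirichlet-to-Neumann operator of $\Omega$ into $a\Lambda_e$, where $a=1/|\Psi'|>0$ on ${\mathbb S}$; moreover this pullback is unitary from $L^2(\partial\Omega,ds)$ onto $L^2({\mathbb S},d\theta/a)$ because the arclength element is $ds=|\Psi'|\,d\theta=d\theta/a$. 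Hence $\mbox{Sp}(\Omega)=\mbox{Sp}(a\Lambda_e)$.

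Now suppose $\Omega_1$ and $\Omega_2$ have the same Steklov spectrum, and put $a_j=1/|\Psi_j'|$ on ${\mathbb S}$ for Riemann maps $\Psi_j:{\mathbb D}\rightarrow\Omega_j$. Then $\mbox{Sp}(a_1\Lambda_e)=\mbox{Sp}(a_2\Lambda_e)$, so, granting Conjecture \ref{C1.1}, the positive functions $a_1,a_2$ are conformally equivalent: there is a conformal or anticonformal $\Phi:{\mathbb D}\rightarrow{\mathbb D}$ with $\varphi=\Phi|_{\mathbb S}$ and $a_2=(a_1\circ\varphi)\,|d\varphi/d\theta|^{-1}$. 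Rewriting this as $|\Psi_2'|=\bigl(|\Psi_1'|\circ\varphi\bigr)\,|d\varphi/d\theta|$ on ${\mathbb S}$ and comparing with the chain rule for $\Psi_1\circ\Phi$ together with the identity $\bigl|\Phi'|_{\mathbb S}\bigr|=|d\varphi/d\theta|$, we see that $\widetilde\Psi:=\Psi_1\circ\Phi$ is a conformal (resp. anticonformal) map of ${\mathbb D}$ onto $\Omega_1$ with $|\widetilde\Psi'|=|\Psi_2'|$ everywhere on ${\mathbb S}$.

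It then remains to upgrade this boundary equality to a Euclidean motion. Assume first that $\Phi$ is conformal, and set $h:=\Psi_2\circ\widetilde\Psi^{-1}:\Omega_1\rightarrow\Omega_2$, a biholomorphism extending smoothly and with non-vanishing derivative to $\overline{\Omega_1}$; at a boundary point $w=\widetilde\Psi(e^{i\theta})$ one gets $|h'(w)|=|\Psi_2'(e^{i\theta})|/|\widetilde\Psi'(e^{i\theta})|=1$. The function $\log|h'|$ is harmonic in $\Omega_1$, continuous up to the boundary, and vanishes on $\partial\Omega_1$, hence vanishes identically; thus the holomorphic function $h'$ has constant modulus $1$ and is therefore a unimodular constant $c$. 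Consequently $h(z)=cz+d$, so $\Omega_2=h(\Omega_1)$ is isometric to $\Omega_1$. If $\Phi$ is anticonformal, the same argument applied after a reflection $z\mapsto\bar z$ yields an orientation-reversing isometry, which is again an isometry of $({\mathbb R}^2,e)$. Thus Conjecture \ref{C1.2} follows from Conjecture \ref{C1.1}.

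The main obstacle is therefore Conjecture \ref{C1.1} itself, which is genuinely open; everything else above is soft. The hard step is precisely the implication ``isospectrality $\Rightarrow$ conformal equivalence'', and this is where the zeta-invariants $Z_k(a)$ introduced in the sequel are meant to help: being spectral invariants of $a\Lambda_e$, they satisfy $Z_k(a_1)=Z_k(a_2)$ for all $k$ whenever $\mbox{Sp}(a_1\Lambda_e)=\mbox{Sp}(a_2\Lambda_e)$, and one would like this infinite family of identities, together with the conformal invariance of the $Z_k$, to pin down the conformal orbit of $a$. Short of a full proof of Conjecture \ref{C1.1}, one can still hope to establish Conjecture \ref{C1.2} for restricted classes of domains (for instance near-circular ones) by combining the reduction above with a perturbative analysis of the forms $Z_k$.
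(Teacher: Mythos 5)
The statement you were asked to prove is Conjecture \ref{C1.2}: the paper offers no proof of it, and neither do you. What you have written is a conditional reduction: you correctly identify the boundary function $a=1/|\Psi'|_{\mathbb S}|$ with $\mbox{Sp}(\Omega)=\mbox{Sp}(a\Lambda_e)$, and your upgrade from conformal equivalence of $a_1,a_2$ to a Euclidean isometry of $\Omega_1,\Omega_2$ is correct and cleanly done (the map $h=\Psi_2\circ(\Psi_1\circ\Phi)^{-1}$ is a biholomorphism with $|h'|=1$ on $\partial\Omega_1$, and since $\log|h'|$ is harmonic with zero boundary values it vanishes, forcing $h(z)=cz+d$ with $|c|=1$; the anticonformal case follows after a reflection). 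But the entire weight of the argument rests on Conjecture \ref{C1.1}, which you simply ``grant''. The paper states explicitly that this converse implication (isospectrality $\Rightarrow$ conformal equivalence) ``is still open'', and the authors even voice doubt about its validity in general. So the genuine gap is the whole hard step: you have proved the implication Conjecture \ref{C1.1} $\Rightarrow$ Conjecture \ref{C1.2} (for ordinary simply connected domains), not Conjecture \ref{C1.2} itself. This implication is essentially what the paper already records, with details delegated to \cite{JS}, where it is noted that full equivalence of the two conjectures requires admitting multisheet domains — a point your one-directional reduction avoids only because you never attempt the reverse implication.

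To be clear about the status: since the target is an open conjecture, no argument of the kind you give can close it; the honest framing is that your text is a (correct) restatement of the known reduction plus a soft uniqueness-upgrade lemma, and the substantive content of the paper — the zeta-invariants $Z_k(a)$, their computability from $\mbox{Sp}(a\Lambda_e)$ (Theorem \ref{T2.1}), and their conformal invariance — is precisely an attempt to attack the missing step, not a proof of it. Your closing paragraph acknowledges this, which is good, but it also confirms that the proposal does not establish the statement.
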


Conjectures \ref{C1.1} and \ref{C1.2} are equivalent if multisheet domains are involved into the consideration. The correspondence between two kinds of Steklov spectra is established as follows. Choose a biholomorphism $\Phi:{\mathbb D}\rightarrow\Omega$. Then $\mbox{Sp}(\Omega)=\mbox{Sp}(a\Lambda_e)$, where $1/a=|\Phi'|_{\mathbb S}|$.

\bigskip

Given a Riemannian metric $g$ on the unit disc ${\mathbb D}$, let $\Delta_g$ be its Laplace -- Beltrami operator. The DN-operator of the metric is defined by
$$
\Lambda_g:C^\infty({\mathbb S})\rightarrow C^\infty({\mathbb S}),\quad \Lambda_g(f)=-\left.\partial u/\partial\nu\right|_{\mathbb S},
$$
where $\nu$ is the unit outward normal to ${\mathbb S}$ with respect to the metric $g$ and $u$ is the solution to the Dirichlet problem
$$
\Delta_gu=0\quad\mbox{in}\quad {\mathbb D},\quad u|_{\mathbb S}=f.
$$
This coincides with (\ref{1.1}) in the case of the Euclidean metric. Again, the eigenvalue spectrum $\mbox{Sp}(\Lambda_g)$ is discrete and non-negative. We again pose the inverse problem:  to what extent is a Riemannian metric $g$ on ${\mathbb D}$ determined by the spectrum $\mbox{Sp}(\Lambda_g)$? Here, the natural conjecture sounds as follows:

\begin{conjecture} \label {C1.3}
A Riemannian metric on the unit disk is determined by its Steklov spectrum uniquely up to a conformal equivalence. More precisely, for two Riemannian metrics $g$ and $g'$ on ${\mathbb D}$, the equality
$\mbox{\rm Sp}(\Lambda_g)=\mbox{\rm Sp}(\Lambda_{g'})$ holds if and only if there exist a diffeomorphism $\Psi:{\mathbb D}\rightarrow {\mathbb D}$ and function $0<\rho\in C^\infty({\mathbb D})$ such that $\rho|_{\mathbb S}=1$ and
$g'=\rho\Psi^*g$.
\end{conjecture}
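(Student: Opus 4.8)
\medskip
\noindent\textbf{Proof strategy.}
The reverse (``if'') implication is the elementary part and I would settle it first. If $g'=\rho\,\Psi^*g$ with a diffeomorphism $\Psi:{\mathbb D}\to{\mathbb D}$ and $0<\rho\in C^\infty({\mathbb D})$, $\rho|_{\mathbb S}=1$, then pulling back along $\Psi$ conjugates the whole Steklov eigenvalue problem by the boundary diffeomorphism $\varphi=\Psi|_{\mathbb S}$, so that $\Lambda_{\Psi^*g}=\varphi^*\circ\Lambda_g\circ(\varphi^*)^{-1}$ and hence $\mbox{Sp}(\Lambda_{\Psi^*g})=\mbox{Sp}(\Lambda_g)$. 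Next, in dimension two one has $\Delta_{\rho h}=\rho^{-1}\Delta_h$, so $\rho h$ and $h$ share the same harmonic functions; and since the outward unit normal and the boundary length element of $\rho h$ differ from those of $h$ by the factor $\rho^{1/2}|_{\mathbb S}=1$, we get $\Lambda_{\rho h}=\Lambda_h$ exactly. Applying this with $h=\Psi^*g$ gives $\Lambda_{g'}=\Lambda_{\Psi^*g}$, and therefore $\mbox{Sp}(\Lambda_{g'})=\mbox{Sp}(\Lambda_g)$.

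For the forward (``only if'') implication the plan is to reduce the metric problem to the boundary problem of Conjecture \ref{C1.1}. By the uniformization theorem (isothermal coordinates together with the Riemann mapping theorem) every metric $g$ on ${\mathbb D}$ may be written $g=\rho\,\Phi^*e$ for some diffeomorphism $\Phi:{\mathbb D}\to{\mathbb D}$ and some $0<\rho\in C^\infty({\mathbb D})$. Combining this with the correspondence recalled in the Introduction one obtains $\mbox{Sp}(\Lambda_g)=\mbox{Sp}(a\Lambda_e)$ for a positive function $a\in C^\infty({\mathbb S})$ built from the boundary data $\Phi|_{\mathbb S}$ and $\rho|_{\mathbb S}$, and a short computation with Definition \ref{D1.1} shows that two metrics are conformally equivalent in the sense of Conjecture \ref{C1.3} precisely when the associated functions $a$, $b$ are conformally equivalent in the sense of Definition \ref{D1.1}. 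Thus Conjecture \ref{C1.3} is equivalent to Conjecture \ref{C1.1}, and it suffices to study the operator $a\Lambda_e$; note that it is similar to the self-adjoint operator $a^{1/2}\Lambda_e\,a^{1/2}$, so its spectrum is real, discrete and non-negative, as asserted.

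It then remains to attack Conjecture \ref{C1.1}, and here I would use exactly the machinery developed in this paper. From $\mbox{Sp}(a\Lambda_e)$ one forms a spectral zeta function, expands it in the Fourier coefficients of $a$ by perturbation theory about a constant function, and reads off the zeta-invariants $Z_k(a)$, $k=1,2,\dots$; these are $2k$-forms in the Fourier coefficients of $a$, are invariant under the conformal group, and are uniquely determined by the spectrum. The final step --- and this is the genuine obstacle, which is precisely why the statement is posed as a conjecture --- is to prove that the whole family $\{Z_k(a)\}_{k\ge 1}$ separates conformal equivalence classes, i.e.\ that $Z_k(a)=Z_k(b)$ for all $k$ forces $a$ and $b$ to be conformally equivalent. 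This is a completeness question for a countable family of polynomial invariants under the (infinite-dimensional) action of the conformal group on the space of Fourier coefficients; a plausible partial program is to verify it first in the linearization at $a\equiv\mbox{const}$ by a dimension count, then to try to propagate the result to a neighbourhood and beyond, but there is no a priori reason why the list $Z_k$ should be complete, and, as the authors themselves are careful to note, in full generality it may well fail.
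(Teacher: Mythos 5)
This statement is an open conjecture which the paper itself does not prove: the text only records that the ``if'' direction is elementary and that Conjecture \ref{C1.3} is equivalent to Conjecture \ref{C1.1}, citing \cite{JS} for that equivalence, while the ``only if'' implication remains unresolved. Your proposal matches this exactly --- the conformal-invariance argument ($\Delta_{\rho h}=\rho^{-1}\Delta_h$ in two dimensions plus $\rho|_{\mathbb S}=1$, and conjugation of $\Lambda_g$ by the boundary diffeomorphism) is correct, the reduction to Conjecture \ref{C1.1} via uniformization is the same route the paper attributes to \cite{JS}, and you rightly identify the completeness of the family $Z_k$ as the genuine open obstacle rather than claiming a proof.
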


Conjectures \ref{C1.1} and \ref{C1.3} are equivalent, as is proved in \cite{JS}. The first version of the inverse problem seems easier from the analytic viewpoint since it is a problem of recovering one function of one real argument. On the other hand, two last versions seem, probably, more interesting from the geometric viewpoint. Of course, any progress in one of these problems would imply the corresponding results for two other problems.

\section{Zeta-invariants}

Our main construction is actually a generalization of arguments by Edward \cite[Theorem 2]{E}. Recall that ${\mathbb S}=\{e^{i\theta}\}$ is the unit circle. For a function $a\in C^\infty(\mathbb S)$, let ${\hat a}_n$ be its Fourier coefficients, i.e.,
$$
a(\theta)=\sum\limits_{n=-\infty}^\infty {\hat a}_n e^{in\theta}.
$$
For every integer $k\geq 1$, we define
\begin{equation}
Z_k(a)=\sum\limits_{j_1+\dots +j_{2k}=0} N_{j_1\dots j_{2k}}\,{\hat a}_{j_1}{\hat a}_{j_2}\dots {\hat a}_{j_{2k}},
                                               \label{2.3}
\end{equation}
where, for $j_1+\dots+ j_{2k}=0$,
\begin{equation}
\begin{aligned}
N_{j_1\dots j_{2k}}=\sum\limits_{n=-\infty}^\infty
\Big[&\left|n(n+j_1)(n+j_1+j_2)\dots(n+j_1+\dots+ j_{2k-1})\right|\\
&-n(n+j_1)(n+j_1+j_2)\dots(n+j_1+\dots +j_{2k-1})\Big].
\end{aligned}
                                               \label{2.4}
\end{equation}
The quantities $Z_k(a)\ (k=1,2,\dots)$ will be called {\it zeta-invariants} of the function $a$ (or of the operator $a\Lambda_e$).
There is only a finite number of nonzero summands on the right-hand side of (\ref{2.4}) since the product
\begin{equation}
f(n)=n(n+j_1)(n+j_1+j_2)\dots(n+j_1+\dots+ j_{2k-1})
                                               \label{2.4.2}
\end{equation}
is a polynomial of degree $2k$ in $n$ which takes positive values for sufficiently large $|n|$.

Series (\ref{2.3}) absolutely converges since Fourier coefficients ${\hat a}_n$ fast decay. We will present corresponding estimates at the end of the current section.


We emphasize that definition (\ref{2.3}) makes sense for an arbitrary (complex-valued) function $a\in C^\infty({\mathbb S})$.
Thus, $Z_k(a)$ are explicitly expressed through Fourier coefficients of $a$, although in a rather complicated manner. On the other hand, in the case of a positive function $a$, zeta-invariants are determined by the eigenvalue spectrum of $a\Lambda_e$ as Theorem \ref{T2.1} below states. Before formulating the theorem, we need some preliminaries.

In the rest of the section, we consider a positive function $a\in C^\infty({\mathbb S})$ normalized by the condition
\begin{equation}
\frac{1}{2\pi}\int\limits_0^{2\pi}\frac{d\theta}{a(\theta)}=1.
                                               \label{2.0}
\end{equation}
Let $\{0=\lambda_0<\lambda_1\leq\lambda_2\leq\dots\}$ be the spectrum of the operator $a\Lambda_e$. {\it The zeta-function} of the operator is defined by
\begin{equation}
\zeta_a(s)=\mbox{Tr}\,[(a\Lambda_e)^{-s}]=\sum\limits_{n=1}^\infty\lambda_n^{-s}.
                                               \label{2.1}
\end{equation}
Recall \cite{E} that the asymptotics of spectra of $a\Lambda_e$ and of $\Lambda_e$ are the same. This implies that series (\ref{2.1}) converges for $\mbox{Re}\,s>1$ and $\zeta_a(s)$ extends to a meromorphic function on ${\mathbb C}$ with the unique simple pole at $s=1$. Moreover, $\zeta_a(s)-2\zeta_R(s)$ is an entire function, where
$$
\zeta_R(s)=\sum\limits_{n=1}^\infty n^{-s}
$$
is the classical Riemann zeta-function.

\begin{theorem} \label{T2.1}
For a  function $0<a\in C^\infty({\mathbb S})$ satisfying (\ref{2.0}) and for every $k\geq 1$,
$$
Z_k(a)=\zeta_a(-2k).
$$
\end{theorem}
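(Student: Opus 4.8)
The plan is to realise $\zeta_a(-2k)$ as a zeta-regularised trace of $(a\Lambda_e)^{2k}$ and to evaluate that trace in two ways: one reproduces the definition of $\zeta_a$, the other the definition (\ref{2.3})--(\ref{2.4}) of $Z_k(a)$. The first step is to record the diagonal matrix entries of $(a\Lambda_e)^{2k}$ in the trigonometric basis. Since $a\Lambda_e$ sends $e^{in\theta}$ to $|n|\sum_m{\hat a}_me^{i(n+m)\theta}$, multiplying along a closed chain $n=p_0\to p_1\to\cdots\to p_{2k}=n$ (each step $p_{r-1}\to p_r$ contributing $|p_{r-1}|{\hat a}_{p_r-p_{r-1}}$) and using $|xy|=|x|\,|y|$ to collect the moduli gives
\begin{equation*}
\big[(a\Lambda_e)^{2k}\big]_{nn}=\sum_{j_1+\dots+j_{2k}=0}|f(n)|\,{\hat a}_{j_1}\cdots{\hat a}_{j_{2k}},
\end{equation*}
where $f(n)$ is the polynomial (\ref{2.4.2}) (which depends on $j_1,\dots,j_{2k}$); note $f$ always carries the factor $n$, so $f(0)=0$. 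I would then fix an auxiliary first-order elliptic operator $Q$ with discrete positive spectrum and consider the regularised trace $\mathrm{Tr}\big[(a\Lambda_e)^{2k}Q^{-s}\big]\big|_{s=0}$: this is legitimate because the possible pole of $\mathrm{Tr}\big[(a\Lambda_e)^{2k}Q^{-s}\big]$ at $s=0$ has residue a ($Q$-dependent) multiple of $\mathrm{res}\,(a\Lambda_e)^{2k}$, and $\mathrm{res}\,(a\Lambda_e)^{2k}=0$ since, for $Q=a\Lambda_e$, this residue is proportional to that of $\zeta_a$ at $s=-2k$, which vanishes by hypothesis. For $Q=a\Lambda_e$, commuting powers give $\mathrm{Tr}\big[(a\Lambda_e)^{2k}(a\Lambda_e)^{-s}\big]=\zeta_a(s-2k)$, so the value at $s=0$ is $\zeta_a(-2k)$.

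For $Q=\Lambda_e$ the computation is explicit. Because $\Lambda_e^{-s}$ is diagonal,
\begin{equation*}
\mathrm{Tr}\big[(a\Lambda_e)^{2k}\Lambda_e^{-s}\big]=\sum_{j_1+\dots+j_{2k}=0}{\hat a}_{j_1}\cdots{\hat a}_{j_{2k}}\sum_{n\neq0}|n|^{-s}|f(n)|\qquad(\mathrm{Re}\,s>2k+1),
\end{equation*}
the series converging absolutely by the rapid decay of the ${\hat a}_n$. Split $|f(n)|=f(n)+\big(|f(n)|-f(n)\big)$. The second summand is nonzero only on the finite set where the monic degree-$2k$ polynomial $f$ is negative, so it contributes an entire function of $s$ whose value at $s=0$ is $\sum_n\big(|f(n)|-f(n)\big)=N_{j_1\dots j_{2k}}$ (the term $n=0$ drops out, $f(0)=0$). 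For the first summand, writing $f(n)=\sum_{i=1}^{2k}c_in^i$ (no constant term), one has $\sum_{n\neq0}|n|^{-s}n^i=(1+(-1)^i)\zeta_R(s-i)$, which is $0$ for odd $i$ and, at $s=0$, equals $2\zeta_R(-i)=0$ for even $i\geq2$. Hence $\mathrm{Tr}\big[(a\Lambda_e)^{2k}\Lambda_e^{-s}\big]$ is holomorphic at $s=0$ with value $\sum_{j_1+\dots+j_{2k}=0}N_{j_1\dots j_{2k}}{\hat a}_{j_1}\cdots{\hat a}_{j_{2k}}=Z_k(a)$.

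The remaining, and central, point is that $\mathrm{Tr}\big[(a\Lambda_e)^{2k}Q^{-s}\big]\big|_{s=0}$ is independent of the choice of $Q$. I would join $Q=\Lambda_e$ to $Q=a\Lambda_e$ by the smooth family $Q_u=a^u\Lambda_e$, $0\le u\le1$, and invoke the standard variation formula for zeta-regularised traces. Since $(\partial_uQ_u)Q_u^{-1}$ is multiplication by $\log a$, the formula becomes $\partial_u\big(\mathrm{Tr}[(a\Lambda_e)^{2k}Q_u^{-s}]\big|_{s=0}\big)=-\mathrm{res}\big((a\Lambda_e)^{2k}M_{\log a}\big)$, where $M_{\log a}$ is multiplication by $\log a$ (the failure of $(a\Lambda_e)^{2k}$ to commute with $Q_u$ contributes only a contour integral of the shape $\oint(\lambda-Q_u)^{-1}X(\lambda-Q_u)^{-2}\,d\lambda$, which vanishes by a residue count). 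This Wodzicki residue is zero: $\mathrm{res}\big((a^{1+\varepsilon}\Lambda_e)^{2k}\big)=0$ for every $\varepsilon$ (again because $\zeta_{a^{1+\varepsilon}}$ has no pole at negative integers); differentiating at $\varepsilon=0$ and using the trace property of $\mathrm{res}$ to see that the $2k$ resulting terms all equal $\mathrm{res}\big((a\Lambda_e)^{2k}M_{\log a}\big)$ yields $2k\,\mathrm{res}\big((a\Lambda_e)^{2k}M_{\log a}\big)=0$. Putting the three steps together, $\zeta_a(-2k)=\mathrm{Tr}[(a\Lambda_e)^{2k}(a\Lambda_e)^{-s}]\big|_{s=0}=\mathrm{Tr}[(a\Lambda_e)^{2k}\Lambda_e^{-s}]\big|_{s=0}=Z_k(a)$.

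The step I expect to be hardest is this last one: turning the variation formula and the cancellation of the trace anomaly into a fully rigorous argument requires care with the pseudodifferential calculus, in particular with keeping track of the orders at which the operators entering the resolvent expansion cease to be of trace class. A route that avoids the Wodzicki residue altogether is to use $\zeta_a(-2k)=(2k)!\,\alpha_{2k}$, with $\alpha_{2k}$ the coefficient of $\tau^{2k}$ in the small-$\tau$ asymptotics of $\mathrm{Tr}[e^{-\tau a\Lambda_e}]$, and to compute that heat trace via the Duhamel expansion about ${\hat a}_0\Lambda_e$; the same arithmetic $\zeta_R(-2j)=0$ then delivers $Z_k(a)$, at the cost of heavier combinatorial bookkeeping.
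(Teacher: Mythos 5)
Your two computations at the ends of the chain are correct and, in fact, closely parallel the paper's: the diagonal entries $\big[(a\Lambda_e)^{2k}\big]_{nn}=\sum_{j_1+\dots+j_{2k}=0}|f(n)|\,{\hat a}_{j_1}\cdots{\hat a}_{j_{2k}}$ are the paper's formula (\ref{2.6}) read on the diagonal, and your splitting $|f(n)|=f(n)+\big(|f(n)|-f(n)\big)$ together with $\zeta_R(-2j)=0$ is exactly the arithmetic behind (\ref{2.9})--(\ref{2.11}), except that the paper subtracts the polynomial part as an honest operator, $(aD_\theta)^{2k}$, rather than symbolically. The genuine gap is the middle link: the claim that the value at $s=0$ of $\mathrm{Tr}\big[(a\Lambda_e)^{2k}Q^{-s}\big]$ is the same for $Q=a\Lambda_e$ and $Q=\Lambda_e$. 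For weighted (zeta-regularized) traces this is \emph{not} a consequence of $\mathrm{res}\big((a\Lambda_e)^{2k}\big)=0$: the difference of the two values equals $-\mathrm{res}\big((a\Lambda_e)^{2k}\,(\log(a\Lambda_e)-\log\Lambda_e)\big)$, and along your family $Q_u=a^u\Lambda_e$ the derivative is governed by $\mathrm{res}\big((a\Lambda_e)^{2k}\,\partial_u\log Q_u\big)$, not by $\mathrm{res}\big((a\Lambda_e)^{2k}M_{\log a}\big)$. The operators $\partial_u\log Q_u$ and $M_{\log a}$ agree only to leading (order-zero) symbol; their difference is a classical operator of order $\leq-1$, and on ${\mathbb S}$ the residue of an order-$2k$ operator times an order $\leq-1$ operator is read off an order $-1$ symbol term that has no evident reason to vanish. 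So the identity you actually prove --- $\mathrm{res}\big((a\Lambda_e)^{2k}M_{\log a}\big)=0$, obtained neatly by differentiating $\mathrm{res}\big((a^{1+\varepsilon}\Lambda_e)^{2k}\big)=0$ in $\varepsilon$ and cycling --- is correct but does not control the quantity that drives the variation, and the phrase ``vanishes by a residue count'' for the noncommutativity corrections is precisely the trace anomaly you would have to compute; it is the entire content of the step, and you acknowledge as much.

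The paper sidesteps all of this. Lemma \ref{L2.1} (this is where hypothesis (\ref{2.0}) enters --- note your argument never uses it) conjugates $aD_\theta$ to $D_\theta$ by a circle diffeomorphism, so the regularized trace of $(aD_\theta)^{2k}$ equals $2\zeta_R(-2k)=0$; then $(a\Lambda_e)^{2k}-(aD_\theta)^{2k}$ is of trace class, because for each frequency pattern $(j_1,\dots,j_{2k})$ the absolute-value-minus-product coefficient is supported on finitely many $n$, and its genuine trace is computed entrywise to be $Z_k(a)$ via (\ref{2.6})--(\ref{2.11}), giving the theorem through (\ref{2.5}). In other words, instead of proving an abstract weight-independence statement, the paper subtracts a concrete comparison operator isospectral to $D_\theta$ that is exactly adapted to the structure of (\ref{2.4}); no Wodzicki-residue or anomaly analysis is needed. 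If you want to keep your framework, the honest task is to prove $\mathrm{res}\big((a\Lambda_e)^{2k}(\log(a\Lambda_e)-\log\Lambda_e)\big)=0$ directly (the heat-trace alternative you sketch at the end faces the same bookkeeping); but the shortest repair is to replace the weight-change argument by the paper's conjugation lemma, which also explains why the normalization (\ref{2.0}) appears in the statement.
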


To prove the theorem, we need the following

\begin{lemma} \label{L2.1}
Introduce the operator $D_\theta=-i\frac{d}{d\theta}:C^\infty({\mathbb S})\rightarrow C^\infty({\mathbb S})$ on the unit circle ${\mathbb S}=\{e^{i\theta}\}$.
For a function $0<a\in C^\infty({\mathbb S})$ satisfying (\ref{2.0}), the operator $aD_\theta$ is intertwined with $D_\theta$, i.e., there exists a diffeomorphism $\varphi:{\mathbb S}\rightarrow{\mathbb S}$ such that $aD_\theta=\varphi^*\circ D_\theta\circ\varphi^{*-1}$, where $\varphi^*u=u\circ\varphi$ for $u\in C^\infty({\mathbb S})$.
\end{lemma}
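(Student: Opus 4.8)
The plan is to exhibit the intertwining diffeomorphism $\varphi$ explicitly as the inverse of an antiderivative of $1/a$. Since $a$ is smooth, positive, and satisfies the normalization $\frac{1}{2\pi}\int_0^{2\pi}\frac{d\theta}{a(\theta)}=1$, the function
\[
\psi(\theta)=\int_0^\theta\frac{dt}{a(t)}
\]
is a smooth, strictly increasing function with $\psi(\theta+2\pi)=\psi(\theta)+2\pi$, hence descends to an orientation-preserving diffeomorphism $\psi:{\mathbb S}\rightarrow{\mathbb S}$. Let $\varphi=\psi^{-1}$; it is again such a diffeomorphism. The first step is to record the chain-rule identity $d\varphi/d\theta=a\circ\varphi$, equivalently $\psi'=1/a$, which is just the defining relation for $\psi$ read backwards.

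The second, computational step is to verify the operator identity $aD_\theta=\varphi^*\circ D_\theta\circ\varphi^{*-1}$ by applying both sides to an arbitrary $u\in C^\infty({\mathbb S})$. Writing $\varphi^{*-1}u=u\circ\psi$ and using $D_\theta=-i\,d/d\theta$, the chain rule gives $D_\theta(u\circ\psi)=(\psi'\cdot(u'\circ\psi))/(-i)^{-1}$; more carefully, $\frac{d}{d\theta}(u\circ\psi)(\theta)=\psi'(\theta)\,(u'\circ\psi)(\theta)=\frac{1}{a(\theta)}(u'\circ\psi)(\theta)$. Then precomposing with $\varphi=\psi^{-1}$ to apply the remaining $\varphi^*$, and multiplying by $a$, one gets back exactly $-i\,u'(\theta)\cdot\frac{a(\varphi(\theta))}{a(\varphi(\theta))}$ after the substitution — the point being that $\varphi^*$ composed with $\psi'\circ\varphi = 1/(a\circ\varphi)$ cancels against the factor $a$ coming from the left. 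Tracking the $-i$ and the reciprocals shows the two sides agree.

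There is no serious obstacle here; the only point requiring care is bookkeeping the direction of composition — whether the conjugating map is $\psi$ or $\psi^{-1}$ — and making sure the normalization (\ref{2.0}) is genuinely used, namely to guarantee that $\psi$ has the correct period $2\pi$ and therefore defines a diffeomorphism of the circle rather than merely of the line. I would also remark that $\varphi$ is unique up to a rotation $\theta\mapsto\theta+c$, since two antiderivatives of $1/a$ differ by a constant; this ambiguity is harmless for the intertwining relation. Finally, it is worth noting in passing that the same $\varphi$ intertwines $a\Lambda_e$ with $\Lambda_e$ only on the Fourier modes where no absolute value is needed — this is exactly the mechanism that will make the difference in (\ref{2.4}) detect the spectrum, but that observation belongs to the proof of Theorem \ref{T2.1} rather than to the lemma itself.
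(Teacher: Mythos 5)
Your overall strategy is the paper's: conjugate by the circle diffeomorphism built from the antiderivative of $1/a$, with the normalization (\ref{2.0}) guaranteeing period $2\pi$. But you pick the wrong one of the two maps, and the verification does not actually go through for your choice. For any diffeomorphism $\varphi$ of ${\mathbb S}$ one has the general identity
\begin{equation*}
\big(\varphi^*\circ D_\theta\circ\varphi^{*-1}\big)u(\theta)=-\,i\,\frac{u'(\theta)}{\varphi'(\theta)},
\qquad\text{i.e.}\qquad
\varphi^*\circ D_\theta\circ\varphi^{*-1}=\frac{1}{\varphi'}\,D_\theta ,
\end{equation*}
so the intertwining $aD_\theta=\varphi^*\circ D_\theta\circ\varphi^{*-1}$ forces $d\varphi/d\theta=1/a$, i.e.\ the conjugating map must be your $\psi$ itself (this is exactly the paper's $\varphi$), not $\varphi=\psi^{-1}$. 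With your choice one has $d\varphi/d\theta=a\circ\varphi$, hence $\varphi^*\circ D_\theta\circ\varphi^{*-1}=\frac{1}{a\circ\varphi}\,D_\theta$, which differs from $aD_\theta$ unless $a\circ\varphi=a$; the identity that does hold for your $\varphi$ is $aD_\theta=\varphi^{*-1}\circ D_\theta\circ\varphi^{*}$.

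The computational step you sketch conceals this: the multiplication operator $a$ contributes the factor $a(\theta)$, not $a(\varphi(\theta))$, so the ``cancellation'' of $a$ against $\psi'\circ\varphi=1/(a\circ\varphi)$ actually yields $a(\theta)/a(\varphi(\theta))$, which is not $1$ in general, and your final expression $-i\,u'(\theta)\,a(\varphi(\theta))/a(\varphi(\theta))$ is not what either side of the claimed identity equals. (Moreover, no multiplication by $a$ should occur at all when evaluating the right-hand side $\varphi^*\circ D_\theta\circ\varphi^{*-1}u$; the factor $a$ belongs to the left-hand side.) The repair is one line and reproduces the paper's proof: take the conjugating diffeomorphism to be $e^{i\theta}\mapsto e^{i\psi(\theta)}$ with $\psi'=1/a$; then $a\,D_\theta(u\circ\psi)=a\cdot\frac{1}{a}\,\big((D_\theta u)\circ\psi\big)=\psi^*(D_\theta u)$, which gives $aD_\theta=\psi^*\circ D_\theta\circ\psi^{*-1}$ as required. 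As written, however, your proof asserts a false identity for the map you chose, so the argument has a genuine gap even though the lemma's existence statement is within reach.
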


\begin{proof}
Define the diffeomorphism $\varphi:{\mathbb S}\rightarrow{\mathbb S}$ by
$$
\varphi(e^{i\theta})=\exp\left[i\int\limits_0^\theta\frac{d\tau}{a(\tau)}\right].
$$
Then $\frac{d\varphi}{d\theta}=a^{-1}(\theta)$.

For a function $u\in C^\infty({\mathbb S})$,
$$
(D_\theta\circ\varphi^*)u=D_\theta(u\circ\varphi)=(D_\theta u)\circ\varphi\cdot\frac{d\varphi}{d\theta}
=a^{-1}(D_\theta u)\circ\varphi=a^{-1}\varphi^*(D_\theta u)=a^{-1}(\varphi^*\circ D_\theta)(u).
$$
We have thus proved that
$$
a(D_\theta\circ\varphi^*)=\varphi^*\circ D_\theta.
$$
This can be rewritten in the form
$$
(aD_\theta)\circ\varphi^*=\varphi^*\circ D_\theta
$$
or
$$
aD_\theta=\varphi^*\circ D_\theta\circ\varphi^{*-1}.
$$
\end{proof}

By the lemma, the operators $(aD_\theta)^2$ and $D_\theta^2=\Lambda_e^2$ are intertwined and therefore
$$
\mbox{Tr}\,[(aD_\theta)^{2s}]=\mbox{Tr}\,[\Lambda_e^{2s}]\quad (\mbox{\rm Re}\,s<-1).
$$
In what follows, we will use just this relation.

\begin{proof}[Proof of Theorem \ref{T2.1}]
Recall that the classical Riemann zeta-function has zeros at even negative integers: $\zeta_R(-2k)=0\ (k=1,2,\dots)$. Therefore
$$
\zeta_a(-2k)=\zeta_a(-2k)-2\zeta_R(-2k)=\mbox{Tr}[(a\Lambda_e)^{2k}-D_\theta^{2k}].
$$
With the help of Lemma \ref{L2.1}, this implies
\begin{equation}
\zeta_a(-2k)=\mbox{Tr}[(a\Lambda_e)^{2k}-(aD_\theta)^{2k}].
                                               \label{2.5}
\end{equation}

We compute the right-hand side of (\ref{2.5}) by evaluating the operators $(a\Lambda_e)^{2k}$ and $(aD_\theta)^{2k}$  on the elements of the trigonometric basis $e^{in\theta}$.

By induction in $k$, we prove the formula
\begin{equation}
\begin{aligned}
(&a\Lambda_e)^{2k}e^{in\theta}=\sum\limits_{r_1,\dots,r_k}\ \sum\limits_{j_1+j_2=r_1-n}\ \sum\limits_{j_3+j_4=r_2-r_1}\dots\sum\limits_{j_{2k-1}+j_{2k}=r_k-r_{k-1}}\\
&\left|nr_1\dots r_{k-1}(n+j_1)(r_1+j_3)(r_2+j_5)\dots(r_{k-1}+j_{2k-1})\right|\,{\hat a}_{j_1}{\hat a}_{j_2}\dots {\hat a}_{j_{2k}}\,e^{ir_k\theta}.
\end{aligned}
                                               \label{2.6}
\end{equation}
We start with the obvious equality
$$
(a\Lambda_e)e^{in\theta}=|n|a\,e^{in\theta}.
$$
Applying the operator $a\Lambda_e$ to this equality, we obtain
\begin{equation}
\begin{aligned}
(a\Lambda_e)^{2}e^{in\theta}&=|n|a\Lambda_e(ae^{in\theta})=|n|a\Lambda_e\Big(\sum\limits_{j_1} {\hat a}_{j_1}e^{i(n+j_1)\theta}\Big)\\
&=|n|a\sum\limits_{j_1} {\hat a}_{j_1}|n+j_1|\,e^{i(n+j_1)\theta}=\sum\limits_{j_2} {\hat a}_{j_2} e^{ij_2\theta}\sum\limits_{j_1} {\hat a}_{j_1}|n(n+j_1)|\,e^{i(n+j_1)\theta}\\
&=\sum\limits_r\Big(\sum\limits_{j_1+j_2=r-n}|n(n+j_1)|\, {\hat a}_{j_1}{\hat a}_{j_2}\Big)e^{ir\theta}.
\end{aligned}
                                               \label{2.7}
\end{equation}
This coincides with (\ref{2.6}) for $k=1$.

Now, we are doing the induction step. Apply the operator $(a\Lambda_e)^{2}$ to formula (\ref{2.6})
$$
\begin{aligned}
(a\Lambda_e&)^{2(k+1)}e^{in\theta}=\sum\limits_{r_1,\dots,r_k}\ \sum\limits_{j_1+j_2=r_1-n}\ \sum\limits_{j_3+j_4=r_2-r_1}\dots\sum\limits_{j_{2k-1}+j_{2k}=r_k-r_{k-1}}\\
&\left|nr_1\dots r_{k-1}(n+j_1)(r_1+j_3)\dots(r_{k-1}+j_{2k-1})\right|\,{\hat a}_{j_1}\dots {\hat a}_{j_{2k}}\,(a\Lambda_e)^{2}e^{ir_k\theta}
\end{aligned}
$$
and use (\ref{2.7}) to obtain
$$
\begin{aligned}
(a\Lambda_e)^{2(k+1)}e^{in\theta}&=\sum\limits_{r_1,\dots,r_k}\ \sum\limits_{j_1+j_2=r_1-n}\ \sum\limits_{j_3+j_4=r_2-r_1}\dots\sum\limits_{j_{2k-1}+j_{2k}=r_k-r_{k-1}}\\
&\left|nr_1\dots r_{k-1}(n+j_1)(r_1+j_3)\dots(r_{k-1}+j_{2k-1})\right|\,{\hat a}_{j_1}\dots {\hat a}_{j_{2k}}\\
&\times\sum\limits_{r_{k+1}}\ \sum\limits_{j_{2k+1}+j_{2k+2}=r_{k+1}-r_k}|r_k(r_k+j_{2k+1})|\,{\hat a}_{j_{2k+1}}{\hat a}_{j_{2k+2}}e^{ir_{k+1}\theta}.
\end{aligned}
$$
After changing the order of summations, this gives (\ref{2.6}) for $k:=k+1$. Formula (\ref{2.6}) is thus proved.

The formula
\begin{equation}
\begin{aligned}
(&aD_\theta)^{2k}e^{in\theta}=\sum\limits_{r_1,\dots,r_k}\ \sum\limits_{j_1+j_2=r_1-n}\ \sum\limits_{j_3+j_4=r_2-r_1}\dots\sum\limits_{j_{2k-1}+j_{2k}=r_k-r_{k-1}}\\
&nr_1\dots r_{k-1}(n+j_1)(r_1+j_3)(r_2+j_5)\dots(r_{k-1}+j_{2k-1})\,{\hat a}_{j_1}\dots {\hat a}_{j_{2k}}\,e^{ir_k\theta}
\end{aligned}
                                               \label{2.8}
\end{equation}
is proved in the same way as (\ref{2.6}) has been proved. We actually do not need to repeat the proof. All we need is to compare the equalities
$$
(a\Lambda_e)e^{in\theta}=|n|ae^{in\theta},\quad (aD_\theta)e^{in\theta}=nae^{in\theta}.
$$
Therefore all formulas for $a\Lambda_e$ are valid for $aD_\theta$ with modulus signs omitted.

Taking the difference of (\ref{2.6}) and (\ref{2.8}), we obtain
\begin{equation}
\begin{aligned}
\big[(a\Lambda_e)^{2k}-(aD_\theta)^{2k}\big]e^{in\theta}&=\sum\limits_{r_1,\dots,r_k}\ \sum\limits_{j_1+j_2=r_1-n}\ \sum\limits_{j_3+j_4=r_2-r_1}\dots\sum\limits_{j_{2k-1}+j_{2k}=r_k-r_{k-1}}\\
&N(n;r_1,\dots,r_{k-1};j_1,j_3\dots,j_{2k-1})\,{\hat a}_{j_1}\dots {\hat a}_{j_{2k}}\,e^{ir_k\theta},
\end{aligned}
                                               \label{2.9}
\end{equation}
where the temporary notation
$$
\begin{aligned}
N(n;r_1,\dots,r_{k-1};j_1,j_3\dots,j_{2k-1})
=|&nr_1\dots r_{k-1}(n+j_1)(r_1+j_3)\dots(r_{k-1}+j_{2k-1})|\\ -&nr_1\dots r_{k-1}(n+j_1)(r_1+j_3)\dots(r_{k-1}+j_{2k-1})
\end{aligned}
$$
is used.

To evaluate the trace of $(a\Lambda_e)^{2k}-(aD_\theta)^{2k}$, we have to distinguish the coefficient at $e^{in\theta}$ on the right-hand side of (\ref{2.9}), i.e., to set $r_k=n$, and then to implement the summation over $n$
\begin{equation}
\begin{aligned}
\mbox{Tr}\,\big[(a\Lambda_e)^{2k}-(aD_\theta)^{2k}\big]&=\sum\limits_n\sum\limits_{r_1,\dots,r_{k-1}}\ \sum\limits_{j_1+j_2=r_1-n}\ \sum\limits_{j_3+j_4=r_2-r_1}\dots\sum\limits_{j_{2k-3}+j_{2k-2}=r_{k-1}-r_{k-2}}\\&\sum\limits_{j_{2k-1}+j_{2k}=n-r_{k-1}}
N(n;r_1,\dots,r_{k-1};j_1,j_3,\dots,j_{2k-1})\,{\hat a}_{j_1}\dots {\hat a}_{j_{2k}}.
\end{aligned}
                                               \label{2.10}
\end{equation}

Now, we change the order of summations in (\ref{2.10}) to move the summation over $n$ to the most inner position (one can easily justify the change of the summation order). To this end, for a fixed $n$, we set
$$
\begin{aligned}
&r_1=j_1+j_2+n=n+j_1+j_2,\\
&r_2=j_3+j_4+r_1=n+j_1+j_2+j_3+j_4,\\
&\dots\dots\dots\dots\\
&r_{k-1}=j_{2k-3}+j_{2k-2}+r_{k-2}=n+j_1+j_2+\dots+j_{2k-2}.
\end{aligned}
$$
Then (\ref{2.10}) takes the form
\begin{equation}
\mbox{Tr}\,\big[(a\Lambda_e)^{2k}-(aD_\theta)^{2k}\big]=\sum\limits_{j_1+\dots+j_{2k}=0}\ \sum\limits_n
\tilde N(n;j_1,\dots,j_{2k-1})\,{\hat a}_{j_1}\dots {\hat a}_{j_{2k}},
                                               \label{2.11}
\end{equation}
where
$$
\begin{aligned}
&\tilde N(n;j_1,\dots,j_{2k-1})\\
&=N(n;n+j_1+j_2,n+j_1+j_2+j_3+j_4,\dots,n+j_1+\dots+j_{2k-2};j_1,j_3,\dots,j_{2k-1})\\
&=\left|n(n+j_1)(n+j_1+j_2)\dots(n+j_1+j_2+\dots +j_{2k-1})\right|\\
&-n(n+j_1)(n+j_1+j_2)\dots(n+j_1+j_2+\dots+ j_{2k-1}).
\end{aligned}
$$

The right-hand side of (\ref{2.11}) coincides with the right-hand side of (\ref{2.3}). We have thus proved
$$
\mbox{Tr}\,\big[(a\Lambda_e)^{2k}-(aD_\theta)^{2k}\big]=Z_k(a).
$$
Together with (\ref{2.5}), this gives the statement of the theorem.
\end{proof}

Let us discuss series (\ref{2.3}) in more details.

Coefficients (\ref{2.4}) are even in the following sense:
\begin{equation}
N_{-j_1,\dots, -j_{2k}}=N_{j_1\dots j_{2k}}\quad (j_1+\dots+ j_{2k}=0).
                                               \label{2.4'}
\end{equation}
This is proved by the change $m=-n$ of the summation index in (\ref{2.4}). These coefficients are also invariant under the cyclic permutation of all indices:
\begin{equation}
N_{j_1 j_2\dots j_{2k}}=N_{j_2 j_3\dots j_{2k} j_1}=\dots=N_{j_{2k}j_1 \dots j_{2k-1}}\quad (j_1+\dots+ j_{2k}=0).
                                               \label{2.4.3}
\end{equation}
This is proved by the change $m=n+j_1$ of the summation index in (\ref{2.4}). But, in the general case, $N_{j_1\dots j_{2k}}$ are not invariant under an arbitrary permutation of indices.

It makes sense to symmetrize the $2k$-form (\ref{2.3}), i.e., to rewrite it in the form
\begin{equation}
Z_k(a)=\sum\limits_{j_1,\dots, j_{2k}=-\infty}^\infty Z_{j_1\dots j_{2k}}\,{\hat a}_{j_1}\dots {\hat a}_{j_{2k}},
                                               \label{2.3.1}
\end{equation}
where
$$
Z_{j_1\dots j_{2k}}=0\quad\mbox{for}\quad j_1+\dots +j_{2k}\neq 0
$$
and
\begin{equation}
Z_{j_1\dots j_{2k}}=\frac{1}{(2k)!}\sum\limits_{\pi\in\Pi_{2k}} N_{j_{\pi(1)}\dots j_{\pi(2k)}}\quad\mbox{for}\quad j_1+\dots +j_{2k}= 0.
                                               \label{2.3.3}
\end{equation}
Here $\Pi_{2k}$ is the group of all permutations of the set $\{1,2,\dots,2k\}$.

The coefficients $Z_{j_1\dots j_{2k}}$ are symmetric, i.e., invariant under an arbitrary permutation of the indices $(j_1,\dots, j_{2k})$.
Of course, the symmetrization preserves property (\ref{2.4'}), i.e.,
\begin{equation}
Z_{-j_1,\dots, -j_{2k}}= Z_{j_1\dots j_{2k}}.
                                               \label{2.4''}
\end{equation}
This implies the important statement: all zeta-invariants are real in the case of a real function $a$. Indeed, applying the complex conjugation to (\ref{2.3.1}) and taking the reality of $Z_{j_1\dots j_{2k}}$ into account, we obtain
$$
\overline{Z_k(a)}=Z_k(\bar a)=\sum\limits_{j_1,\dots, j_{2k}=-\infty}^\infty Z_{j_1\dots j_{2k}}\,\overline{{\hat a}_{j_1}}\dots \overline{{\hat a}}_{j_{2k}}.
$$
Fourier coefficients of a real function satisfy $\overline{{\hat a}_{j}}={\hat a}_{-j}$ and the last formula becomes
$$
\overline{Z_k(a)}=\sum\limits_{j_1,\dots ,j_{2k}=-\infty}^\infty Z_{-j_1,\dots,- j_{2k}}\, {\hat a}_{j_1}\dots {\hat a}_{j_{2k}}.
$$
The right-hand side of this formula coincides with the right-hand side of (\ref{2.3.1}) since the coefficients are even.

Formula (\ref{2.3.3}) can be simplified a little bit with the help of (\ref{2.4.3}). Indeed, let $\Pi_{2k-1}$ be the subgroup of $\Pi_{2k}$ consisting of all permutations fixing the last element, i.e.,
$$
\Pi_{2k-1}=\{\pi=(\pi(1),\dots,\pi(2k-1),2k)\}\subset\Pi_{2k}.
$$
Let $\zeta=(2,3,\dots,2k,1)$ be the cyclic permutation. Represent $\Pi_{2k}$ as the union of residue classes
$$
\Pi_{2k}=\bigcup\limits_{\ell=0}^{2k-1}\zeta^\ell\Pi_{2k-1}
$$
and separate summands of (\ref{2.3.3}) to $2k$ groups according to the representation. By (\ref{2.4.3}), these partial sums coincide and formula (\ref{2.3.3}) simplifies to the following one:
\begin{equation}
Z_{j_1\dots j_{2k}}=\frac{1}{(2k-1)!}\sum\limits_{\pi\in\Pi_{2k-1}} N_{j_{\pi(1)}\dots j_{\pi(2k-1)}j_{2k}}\quad\mbox{for}\quad j_1+\dots +j_{2k}= 0.
                                               \label{2.3.4}
\end{equation}

Let us prove the convergence of series (\ref{2.3.1}). To this end we will first derive the following estimate for the coefficients of the series:
\begin{equation}
0\leq Z_{j_1\dots j_{2k}}\leq 2\big(2(|j_1|+\dots+|j_{2k}|)\big)^{2k+1}.
                                               \label{2.4.1}
\end{equation}
Indeed, let us fix $(j_1,\dots,j_{2k})$ and set $|j|=|j_1|+\dots+|j_{2k}|$. Let $x_-$ and $x_+$ be the minimal and maximal roots of the polynomial  $f(n)$ defined by (\ref{2.4.2}). The roots satisfy $|x_\pm|\leq |j|$. A summand of (\ref{2.4}) can be non-zero only if $n\in(x_-,x_+)$, the number of such summands is $\leq 2|j|$. The value of each summand is not more than
$
2(|n|+|j|)^{2k}\leq 2(2|j|)^{2k}.
$
Therefore
$$
N_{j_1\dots j_{2k}}\leq 2(2|j|)^{2k}(2|j|)=2(2|j|)^{2k+1}.
$$
This proves (\ref{2.4.1}).

Fourier coefficients of a smooth function $a$ fast decay, i.e., satisfy
$$
|{\hat a}_n|\leq C_M(|n|+1)^{-M}
$$
for arbitrary $M>0$. Together with (\ref{2.4.1}), this implies the absolute convergence of series (\ref{2.3.1}). Indeed,
$$
|Z_{j_1\dots j_{2k}}\,{\hat a}_{j_1}\dots {\hat a}_{j_{2k}}|\leq 2^{2k+2}C_M^{2k}(|j|+1)^{-M+2k+1},\quad\mbox{where}\quad |j|=|j_1|+\dots+|j_{2k}|.
$$
Therefore
$$
\sum\limits_{j_1+\dots +j_{2k}=0}|Z_{j_1\dots j_{2k}}\,{\hat a}_{j_1}\dots {\hat a}_{j_{2k}}|\leq 2^{2k+2}C_M^{2k}\sum\limits_{\ell=0}^\infty (\ell+1)^{-M+2k+1}K(\ell),
$$
where
$$
K(\ell)=\sharp\{(j_1,\dots,j_{2k})\mid j_1+\dots +j_{2k}=0,\ |j_1|+\dots+|j_{2k}|=\ell\}\leq(2\ell+1)^{2k}.
$$
Finally,
$$
\sum\limits_{j_1+\dots +j_{2k}=0}|Z_{j_1\dots j_{2k}}\,{\hat a}_{j_1}\dots {\hat a}_{j_{2k}}|\leq 2^{4k+2}C_M^{2k}\sum\limits_{\ell=0}^\infty (\ell+1)^{-M+4k+1}.
$$
The series on the right-hand side converges if $M$ is sufficiently large.

The first zeta-invariant was actually introduced by Edward \cite{E}. Let us reproduce his calculations here. By definition (\ref{2.3})--(\ref{2.4}),
\begin{equation}
Z_1(a)=\sum\limits_{j+\ell=0}Z_{j\ell}\,a_ja_\ell=\sum\limits_{j}Z_{j,-j}\,{\hat a}_j{\hat a}_{-j},
                                               \label{2.12}
\end{equation}
where
$$
Z_{j,-j}=N_{j,-j}=\sum\limits_n\Big(|n(n+j)|-n(n+j)\Big).
$$
Obviously,
$$
|n(n+j)|-n(n+j)=\left\{\begin{array}{ll}
-2n(n+j),\quad&\mbox{if}\quad 0<n<-j,\\
-2n(n+j),\quad&\mbox{if}\quad -j<n<0,\\
0&\mbox{otherwise}.
\end{array}\right.
$$
Therefore, for a positive $j$,
$$
Z_{j,-j}=-2\sum\limits_{n=-j}^{-1}n(n+j)=-2\sum\limits_{n=-j}^{-1}n^2-2j\sum\limits_{n=-j}^{-1}n
=-2\sum\limits_{n=1}^{j}n^2+2j\sum\limits_{n=1}^{j}n=\frac{1}{3}(j^3-j).
$$
Here, we have used the equalities
\begin{equation}
\sum\limits_{n=1}^{j}n=\frac{1}{2}j(j+1),\quad\sum\limits_{n=1}^{j}n^2=\frac{1}{6}j(j+1)(2j+1).
                                               \label{2.15}
\end{equation}
Similarly, $Z_{j,-j}=\frac{1}{3}|j^3-j|$ for a negative $j$. Thus, for all $j$,
\begin{equation}
Z_{j,-j}=\frac{1}{3}|j^3-j|
                                               \label{2.12'}
\end{equation}
Substituting these values into (\ref{2.12}), we obtain
\begin{equation}
Z_1(a)=\frac{1}{3}\sum\limits_{j=-\infty}^\infty|j^3-j|\, {\hat a}_j{\hat a}_{-j}=\frac{2}{3}\sum\limits_{n=2}^\infty(n^3-n)\, {\hat a}_n{\hat a}_{-n}.
                                               \label{2.13}
\end{equation}

\section{Conformal equivalence in terms of Fourier coefficients}

For $\rho\in(-1,1)$, let $\Phi_\rho$ be the conformal transformation of the unit disk defined by
\begin{equation}
\Phi_\rho(z)=\frac{z-\rho}{1-\rho z}.
                                               \label{3.2}
\end{equation}
Given a function $a\in C^\infty({\mathbb S})$, let the function $b$ be conformally equivalent to $a$ via the conformal map $\Phi_\rho$, i.e.,
\begin{equation}
b=a\circ\varphi\left(\frac{d\varphi}{d\theta}\right)^{-1},\quad\mbox{where}\quad \varphi=\Phi_\rho|_{\mathbb S}.
                                               \label{4.0}
\end{equation}
This fact will be denoted by $b=a\Phi_\rho$, the notation will be explained in the next section.
As is seen from (\ref{4.0}), Fourier coefficients ${\hat b}_n$ of $b$ should depend linearly on Fourier coefficients ${\hat a}_n$ of $a$, i.e.,
$$
{\hat b}_n=\sum\limits_k\mu_{nk}(\rho){\hat a}_k.
$$
In the current section, we will evaluate the (infinite) matrix $M(\rho)=\big(\mu_{nk}(\rho)\big)_{n,k=-\infty}^\infty$ and establish some properties of the matrix.

By the definition of Fourier coefficients,
$$
{\hat b}_n=\frac{1}{2\pi}\int\limits_0^{2\pi}e^{-in\theta}b(\theta)\,d\theta=
\frac{1}{2\pi}\int\limits_0^{2\pi}e^{-in\theta}a(\varphi(\theta))\left(\frac{d\varphi}{d\theta}\right)^{-1}\,d\theta
$$
or
$$
{\hat b}_n=
\frac{1}{2\pi}\int\limits_0^{2\pi}e^{-in\theta(\varphi)}a(\varphi)\left(\frac{d\theta}{d\varphi}(\varphi)\right)^{2}\,d\varphi.
$$
Change the integration variable as
$$
z=e^{i\varphi},\quad d\varphi=\frac{1}{i}z^{-1}\,dz,\quad e^{i\theta(\varphi)}=\frac{z+\rho}{1+\rho z},\quad\frac{d\theta}{d\varphi}=\frac{1-\rho^2}{|1+\rho z|^2}.
$$
Then
$$
{\hat b}_n=(1-\rho^2)^2
\frac{1}{2\pi i}\oint\limits_{|z|=1}\left(\frac{1+\rho z}{z+\rho}\right)^nz^{-1}|1+\rho z|^{-4}a(z)\,dz.
$$
Substituting $a(z)=\sum_k{\hat a}_kz^k$, we arrive to the formula
\begin{equation}
{\hat b}_n=\sum\limits_{k=-\infty}^\infty\mu_{nk}{\hat a}_k,
                                               \label{4.1}
\end{equation}
where
\begin{equation}
\mu_{nk}=(1-\rho^2)^2
\frac{1}{2\pi i}\oint\limits_{|z|=1}\left(\frac{1+\rho z}{z+\rho}\right)^nz^{k-1}|1+\rho z|^{-4} \,dz.
                                               \label{4.2}
\end{equation}

We expand the last factor of the integrand of (\ref{4.2}) in powers of $z$ taking the relation $|z|=1$ into account
$$
\begin{aligned}
|1+\rho z|^2&=(1+\rho z)(1+\rho z^{-1})=\frac{1}{z}(1+\rho z)(z+\rho),\\
|1+\rho z|^{-4}&=z^2(1+\rho z)^{-2}(z+\rho)^{-2}.
\end{aligned}
$$
Substituting this expression into (\ref{4.2}), we obtain the final formula
\begin{equation}
\mu_{nk}=\mu_{nk}(\rho)=(1-\rho^2)^2
\frac{1}{2\pi i}\oint\limits_{|z|=1}\frac{(1+\rho z)^{n-2}}{(z+\rho)^{n+2}}z^{k+1}\,dz.
                                               \label{4.3}
\end{equation}

We introduce also the constant matrix $D=(d_{nk})_{n,k=-\infty}^\infty$,
\begin{equation}
d_{nk}=(n-2)\delta_{n-1,k}-(n+2)\delta_{n+1,k}=\left\{
\begin{array}{cll}
n-2\quad&\mbox{if}\quad&k=n-1,\\
-(n+2)\quad&\mbox{if}\quad&k=n+1,\\
0&&\mbox{otherwise}.
\end{array}\right.
                                               \label{4.40}
\end{equation}

\begin{proposition} \label{P3.1}
The matrix $M(\rho)$ is expressed through $D$ and $\rho\in(-1,1)$ by
\begin{equation}
M(\rho)=e^{tD},\quad\mbox{where}\quad \tanh t=\rho.
                                               \label{4.41}
\end{equation}
The map $\rho\mapsto M(\rho)$ satisfies
\begin{equation}
M(\rho)M(\rho')=M(\rho''),\quad\mbox{where}\quad \rho''=\frac{\rho+\rho'}{1+\rho\rho'}.
                                               \label{4.42}
\end{equation}
In particular, the matrices $M(\rho)$ and $M(\rho')$ commute as well as $M(\rho)$ commutes with $D$.
\end{proposition}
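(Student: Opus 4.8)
The plan is to establish the group law (\ref{4.42}) first by a purely geometric argument, and then to deduce the exponential formula (\ref{4.41}) from it together with a single derivative computation at $\rho=0$.

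For (\ref{4.42}) I would first record the composition rule for the maps (\ref{3.2}): a direct calculation gives $\Phi_\rho\circ\Phi_{\rho'}=\Phi_{\rho''}$ with $\rho''=(\rho+\rho')/(1+\rho\rho')$, and in particular $\Phi_0=\mathrm{id}$. Next I would observe that the operation $a\mapsto a\Phi_\rho$ of (\ref{4.0}) is contravariantly functorial in $\Phi_\rho$: if $b=a\Phi_\rho$ and $c=b\Phi_{\rho'}$, then by (\ref{4.0}) and the chain rule $c(\theta)=a\big(\psi(\theta)\big)\big(d\psi/d\theta\big)^{-1}$ with $\psi=\varphi_\rho\circ\varphi_{\rho'}=(\Phi_\rho\circ\Phi_{\rho'})|_{\mathbb S}=\Phi_{\rho''}|_{\mathbb S}$, so $c=a\Phi_{\rho''}$; this identity is valid for every $a\in C^\infty({\mathbb S})$, vanishing or not. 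Passing to Fourier coefficients via (\ref{4.1}) turns it into $M(\rho')M(\rho)=M(\rho'')$, and since $\rho''$ is symmetric in $\rho$ and $\rho'$, interchanging the two parameters shows that $M(\rho)$ and $M(\rho')$ commute; this is (\ref{4.42}). That $M(0)=I$ follows from $\Phi_0=\mathrm{id}$, or directly from (\ref{4.3}).

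To obtain (\ref{4.41}) I would put $U(t)=M(\tanh t)$ for $t\in{\mathbb R}$; the addition formula for $\tanh$ converts (\ref{4.42}) into $U(t)U(t')=U(t+t')$ with $U(0)=I$. Since (\ref{4.3}) presents each $\mu_{nk}(\rho)$ as a contour integral whose integrand is jointly smooth (indeed analytic) in $(\rho,z)$ for $|\rho|<1$ and $|z|=1$, differentiation under the integral sign is legitimate, so $\rho\mapsto M(\rho)$ and hence $t\mapsto U(t)$ is smooth. Differentiating $U(t+s)=U(t)U(s)=U(s)U(t)$ in $s$ at $s=0$ then gives $U'(t)=U(t)U'(0)=U'(0)U(t)$ with $U(0)=I$, whence $U(t)=e^{tU'(0)}$; in particular $U'(0)$ commutes with every $U(t)$, which is the last claim of the proposition. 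Because $d(\tanh t)/dt=1-\tanh^2t$ equals $1$ at $t=0$, one has $U'(0)=(dM/d\rho)(0)$, so it remains to compute this from (\ref{4.3}): the prefactor $(1-\rho^2)^2$ has vanishing derivative at $\rho=0$, so differentiating the rational part of the integrand and setting $\rho=0$ leaves $(n-2)z^{k-n}-(n+2)z^{k-n-2}$ under the integral sign; since $\frac{1}{2\pi i}\oint_{|z|=1}z^m\,dz=\delta_{m,-1}$, the integral evaluates to $(n-2)\delta_{n-1,k}-(n+2)\delta_{n+1,k}$, which is exactly $d_{nk}$ of (\ref{4.40}). Hence $U'(0)=D$ and $M(\rho)=e^{tD}$ with $\tanh t=\rho$, finishing the argument.

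The real content lies in this last derivative computation and in recognizing the one-parameter-group structure; everything else is formal. The point I would be most careful about is that $D$ and the $M(\rho)$ are infinite matrices, so I would note at the outset that all of them act on the space of rapidly decreasing sequences — the Fourier coefficients of $C^\infty$ functions on ${\mathbb S}$ — on which $D$ is well defined and the series $\sum_m t^mD^m/m!$, equivalently the solution of the linear system $\dot x=Dx$, converges, so that the manipulations with $e^{tD}$ and with $U'(t)$ are rigorous in that topology. I would also check, routinely, that $\oint$ and $d/d\rho$ may be interchanged in (\ref{4.3}).
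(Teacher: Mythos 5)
Your proof is correct, but it runs in the opposite direction to the paper's. The paper proves (\ref{4.41}) first: it notes that (\ref{4.41}) amounts to the ODE $(1-\rho^2)\,dM/d\rho=DM$ with $M(0)=I$, i.e.\ to the entrywise relations (\ref{4.43}), and verifies these directly by differentiating the contour integral (\ref{4.3}) and checking that the resulting integrand $f(n,\rho,z)$ vanishes identically; the group law (\ref{4.42}) and the commutation statements are then read off from the exponential formula. You instead prove (\ref{4.42}) first, from the composition law $\Phi_\rho\circ\Phi_{\rho'}=\Phi_{\rho''}$ and the fact that $a\mapsto a\Phi$ is a right action (chain rule), and then recover (\ref{4.41}) from the one-parameter group $U(t)=M(\tanh t)$ together with the single residue computation $M'(0)=D$, which you carry out correctly and which indeed reproduces (\ref{4.40}). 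Your route makes (\ref{4.42}) conceptually transparent --- it is nothing but the group structure of the hyperbolic translations --- and replaces the paper's algebraic identity at general $\rho$ by one evaluation at $\rho=0$; the price is that you must differentiate products of infinite matrices (e.g.\ $\frac{d}{ds}\big[U(s)U(t)\big]$ at $s=0$ requires interchanging $d/ds$ with the sum over the intermediate index, which should be justified by estimates of the type (\ref{4.50})), whereas the paper's verification of (\ref{4.43}) is purely entrywise and needs no such interchange. Two small cautions: the identity your action argument yields is $M(\rho')M(\rho)=M(\rho'')$, so the symmetry of $\rho''$ in $(\rho,\rho')$ is genuinely needed to get the stated order and the commutativity (you do note this); and your remark that the series $\sum_m t^mD^m/m!$ converges on rapidly decreasing sequences is doubtful, since the entries of $D$ grow linearly in $n$ ($D$ is essentially a first-order differential operator on ${\mathbb S}$), so $e^{tD}$ should be read, as the paper implicitly does, as the solution of $\dot M=DM$, $M(0)=I$ --- which is exactly what your relation $U'(t)=U'(0)U(t)=DU(t)$ delivers, so this does not affect the substance of your argument.
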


\begin{proof}
It suffices to prove (\ref{4.41}), other statements of the proposition follow from (\ref{4.41}). Formula (\ref{4.41}) is equivalent to the differential equation
$$
\frac{dM}{dt}=DM.
$$
On assuming the variables $\rho$ and $t$ to be related by $\tanh t=\rho$, we rewrite the latter equation in the equivalent form
$$
(1-\rho^2)\frac{dM}{d\rho}=DM.
$$
Substituting value (\ref{4.40}), we write this in the form
\begin{equation}
(1-\rho^2)\frac{d\mu_{nk}}{d\rho}-(n-2)\mu_{n-1,k}+(n+2)\mu_{n+1,k}=0.
                                               \label{4.43}
\end{equation}
Thus, all we need is to prove the validity of (\ref{4.43}).

Differentiate (\ref{4.3}) to obtain
$$
\begin{aligned}
\frac{d\mu_{nk}}{d\rho}=\frac{(1-\rho^2)}{2\pi i}\oint\limits_{|z|=1}&\frac{(1+\rho z)^{n-3}}{(z+\rho)^{n+3}}\Big[-4\rho(1+\rho z)(z+\rho)\\
&+(n-2)(1-\rho^2)z(z+\rho)-(n+2)(1-\rho^2)(1+\rho z)\Big] z^{k+1}\,dz.
\end{aligned}
$$
Substituting this value of $d\mu_{nk}/d\rho$ and value (\ref{4.3}) of $\mu_{n\pm 1,k}$, we evaluate
$$
(1-\rho^2)\frac{d\mu_{nk}}{d\rho}-(n-2)\mu_{n-1,k}+(n+2)\mu_{n+1,k})=\frac{(1-\rho^2)^2}{2\pi i}\oint\limits_{|z|=1}\frac{(1+\rho z)^{n-3}}{(z+\rho)^{n+3}}f(n,\rho,z)
 z^{k+1}\,dz,
$$
where
$$
\begin{aligned}
f(n,\rho,z)=
-&4\rho(1+\rho z)(z+\rho)
+(n-2)(1-\rho^2)z(z+\rho)-(n+2)(1-\rho^2)(1+\rho z)\\-&(n-2)(z+\rho)^2+(n+2)(1+\rho z)^2.
\end{aligned}
$$
As one can easily check, $f(n,\rho,z)$ is identically equal to zero.
\end{proof}

\bigskip

Integral (\ref{4.3}) can be evaluated with the help of the residue theorem. First of all,
in the case of $n\leq-2$ and $k\geq-1$, the integrand of (\ref{4.3}) is a holomorphic function in the unit disk and therefore
\begin{equation}
\mu_{nk}=0\quad\mbox{for}\quad n\leq-2\quad\mbox{and}\quad k\geq-1.
                                               \label{4.4}
\end{equation}

Change the integration variable in (\ref{4.3}) as $z=1/\zeta$

$$
\begin{aligned}
\mu_{nk}&=(1-\rho^2)^2\frac{1}{2\pi i}\oint\limits_{|\zeta|=1}\frac{(1+\rho/\zeta)^{n-2}}{(1/\zeta+\rho)^{n+2}}\zeta^{-k-3}\,d\zeta\\
&=(1-\rho^2)^2\frac{1}{2\pi i}\oint\limits_{|\zeta|=1}\frac{(\zeta+\rho)^{n-2}}{(1+\rho\,\zeta)^{n+2}}\zeta^{-k+1}\,d\zeta\\
&=(1-\rho^2)^2\frac{1}{2\pi i}\oint\limits_{|\zeta|=1}\frac{(1+\rho\,\zeta)^{-n-2}}{(\zeta+\rho)^{-n+2}}\zeta^{-k+1}\,d\zeta=\mu_{-n,-k}.
\end{aligned}
$$
We have thus proved that
\begin{equation}
\mu_{nk}=\mu_{-n,-k}.
                                               \label{4.5}
\end{equation}
Together with (\ref{4.4}) this gives
\begin{equation}
\mu_{nk}=0\quad\mbox{for}\quad n\geq 2\quad\mbox{and}\quad k\leq 1.
                                               \label{4.6}
\end{equation}
Because of (\ref{4.5}), it suffices to consider the case of $n\geq 0$ only.

Assuming  $k\geq-1$, as is seen from (\ref{4.3}),
\begin{equation}
\mu_{nk}=(1-\rho^2)^2\mbox{Res}\left[\frac{(1+\rho z)^{n-2}}{(z+\rho)^{n+2}}z^{k+1}\right]_{z=-\rho} \quad(k\geq -1).
                                               \label{4.7}
\end{equation}
To find the residue, we have to expand the function
$$
\frac{(1+\rho z)^{n-2}}{(z+\rho)^{n+2}}z^{k+1}
$$
in powers of $(z+\rho)$.

First of all,
\begin{equation}
z^{k+1}=\sum\limits_{\ell=0}^{k+1}(-1)^{k-\ell+1}{{k+1}\choose\ell}\rho^{k-\ell+1}(z+\rho)^\ell.
                                               \label{3.19}
\end{equation}
Hereafter ${r\choose s}=\frac{r!}{s!(r-s)!}$ is the binomial coefficient that is assumed to be defined for all integers $r$ and $s$ under the agreement
\begin{equation}
{r\choose s}=0\quad\mbox{if}\quad r<0\quad\mbox{or}\quad s<0\quad\mbox{or}\quad s>r.
                                               \label{3.13}
\end{equation}

Next, from the equality $1+\rho z=\rho(z+\rho)+(1-\rho^2)$, we obtain
\begin{equation}
(1+\rho z)^{n-2}=\sum\limits_{p=0}^{n-2}{{n-2}\choose p}\rho^p(1-\rho^2)^{n-p-2}(z+\rho)^p.
                                               \label{3.19'}
\end{equation}

By (\ref{3.19}) and (\ref{3.19'}), assuming $n\geq 2$,
$$
\begin{aligned}
&\frac{(1+\rho z)^{n-2}z^{k+1}}{(z+\rho)^{n+2}}=\\=&\sum\limits_{m=0}^{n+k-1}\left(
\sum\limits_{\ell+p=m}(-1)^{k-\ell+1}{{n-2}\choose p}{{k+1}\choose\ell}\rho^{k+p-\ell+1}(1-\rho^2)^{n-p-2}\right)(z+\rho)^{m-n-2}.
\end{aligned}
$$
According to (\ref{4.7}), we have to take the coefficient at $(z+\rho)^{-1}$ on the right-hand side of this formula, i.e., to set $m=n+1$. We thus obtain
$$
\mu_{nk}=(1-\rho^2)^2
\sum\limits_{\ell+p=n+1}(-1)^{k-\ell+1}{{n-2}\choose p}{{k+1}\choose\ell}\rho^{k+p-\ell+1}(1-\rho^2)^{n-p-2}.
$$
Setting $p=n-\ell+1$, we obtain the final formula
\begin{equation}
\mu_{nk}=(-1)^{k+1}\frac{\rho^{n+k+2}}{1-\rho^2}\sum\limits_{\ell}(-1)^{\ell}{{n-2}\choose {\ell-3}}{{k+1}\choose\ell}\rho^{-2\ell}(1-\rho^2)^\ell\quad(n\geq 2,\ k\geq -1).
                                               \label{4.8}
\end{equation}
The summation in (\ref{4.8}) is actually performed in the limits
\begin{equation}
3\leq \ell\leq\mbox{min}\,(n+1,k+1).
                                               \label{4.8'}
\end{equation}

Formulas (\ref{4.6}) and (\ref{4.8}) explicitly express $\mu_{nk}$ for $n\geq 2$ and all $k$. Together with (\ref{4.5}) this gives $\mu_{nk}$ for $|n|\geq 2$ and all $k$. It remains to consider the cases of $n=0,\pm1$. We just present the results for the latter cases which are obtained by the same calculations as above.

\begin{equation}
\mu_{-1,k}=\frac{(-\rho)^{k+1}}{1-\rho^2}\quad\mbox{for}\quad k\geq -1;
                                               \label{4.16}
\end{equation}
\begin{equation}
\mu_{0,k}=\mu_{0,-k}=\frac{(-\rho)^k}{1-\rho^2}\big((k+1)-(k-1)\rho^2\big)\quad\mbox{for}\quad k\geq -1;
                                               \label{4.17}
\end{equation}
\begin{equation}
\mu_{1,k}=\frac{(-\rho)^{k-1}}{1-\rho^2}\Big(\frac{k(k+1)}{2}-(k^2-1)\rho^2+\frac{k(k-1)}{2}\rho^4\Big)\quad\mbox{for}\quad k\geq -1.
                                               \label{4.18}
\end{equation}
In particular,
\begin{equation}
\mu_{n,-1}=\mu_{n,0}=\mu_{n,1}=0\quad\mbox{for}\quad |n|\geq 2
                                               \label{4.15}
\end{equation}
and
\begin{equation}
\left(\begin{array}{ccc}
\mu_{-1,-1}&\mu_{-1,0}&\mu_{-1,1}\\
\mu_{0,-1}&\mu_{0,0}&\mu_{0,1}\\
\mu_{1,-1}&\mu_{1,0}&\mu_{1,1}
\end{array}\right)
=\frac{1}{1-\rho^2}
\left(\begin{array}{ccc}
1&-\rho&\rho^2\\
-2\rho&1+\rho^2&-2\rho\\
\rho^2&-\rho&1
\end{array}\right).
                                               \label{4.10}
\end{equation}
As one easily sees, $(1/2,1,1/2)^t$ is an eigenvector of matrix (\ref{4.10}) associated with the eigenvalue $\frac{1-\rho}{1+\rho}$. This means that, for every $\rho\in(0,1)$, the function
$1+\cos\theta$ is an eigenfunction of the operator $a\mapsto a\Phi_\rho$ associated with the eigenvalue $\frac{1-\rho}{1+\rho}$.

In the next section, we will need the estimate
\begin{equation}
|\mu_{n,k}(\rho)|\leq C_k|n|^{|k|}|\rho|^{|n|/2}\quad\mbox{for}\quad |n|\geq 2|k|\geq 2.
                                               \label{4.50}
\end{equation}
It easily follows from (\ref{4.8}). Indeed, let us first assume that $n\geq 2$ and $k\geq 0$. We derive from (\ref{4.8})
$$
|\mu_{nk}|\leq\sum\limits_{\ell}{{n-2}\choose {\ell-3}}{{k+1}\choose\ell}|\rho|^{n+2k-2\ell+2}(1-\rho^2)^{\ell-1}.
$$
As we have mentioned, the summation is actually performed over $\ell$ satisfying (\ref{4.8'}). Therefore the last factor on the right-hand side is bounded by 1 from above and the estimate  simplifies to the following one:
$$
|\mu_{nk}|\leq\sum\limits_{\ell}{{n-2}\choose {\ell-3}}{{k+1}\choose\ell}|\rho|^{n+2k-2\ell+2}.
$$
Assuming $n\geq 2k\geq 0$, (\ref{4.8'}) implies $n+2k-2\ell+2\geq n/2$ and our estimate takes the form
$$
|\mu_{nk}|\leq|\rho|^{n/2}\sum\limits_\ell {{n-2}\choose{\ell-3}}{{k+1}\choose{\ell}}.
$$
Finally,
$$
{{n-2}\choose{\ell-3}}=\frac{(n-2)(n-3)\dots(n-\ell+2)}{(\ell-3)!}\leq\frac{n^{\ell-2}}{(\ell-3)!}\leq\frac{n^{k-1}}{(\ell-3)!}
$$
and
$$
|\mu_{nk}|\leq  n^{k-1}|\rho|^{n/2}\sum\limits_{\ell=3}^{k+1} \frac{1}{(\ell-3)!}{{k+1}\choose{\ell}}=C_k n^{k-1} |\rho|^{n/2}.
$$
This proves (\ref{4.50}) in the case of $n\geq 2$ and $k\geq 0$. In the case of $n\geq 2$ and $k\leq 0$, estimate (\ref{4.50}) is trivially valid in virtue of (\ref{4.6}). Finally, to prove (\ref{4.50}) in the case of a negative $n$, it suffices to remember the evenness property (\ref{4.5}).

\section{Zeta-invariants and the conformal group}

If the operators $a\Lambda_e$ and $b\Lambda_e$ are isospectral for two positive functions $a,b\in C^\infty({\mathbb S})$, then by Theorem \ref{T2.1},
\begin{equation}
Z_k(a)=Z_k(b)\quad(k=1,2,\dots).
                                               \label{3.1}
\end{equation}
In particular, (\ref{3.1}) holds for conformally equivalent positive functions $a$ and $b$.

Let $G$ be the group of all conformal and anticonformal transformations of the unit disk ${\mathbb D}$ (it is a Lee group with two connected components, the component of unity is isomorphic to $PSL(2,{\mathbb R})$).
Restricting each transformation $\Phi\in G$ to ${\mathbb S}=\partial{\mathbb D}$, we consider $G$ as the three-dimensional Lee group of diffeomorphisms of the unit circle ${\mathbb S}$. Therefore the group $G$ acts from the right on the vector space $C^\infty({\mathbb S})$ by
\begin{equation}
a\Phi=a\circ\varphi\left|d\varphi/d\theta\right|^{-1}\quad\mbox{for}\quad \Phi\in G,\ a\in C^\infty(\gamma),\quad\mbox{where}\quad\varphi=\Phi|_{{\mathbb S}}.
                                               \label{5.-3}
\end{equation}
In these notations, formula (\ref{3.1}) means that
\begin{equation}
Z_k(a\Phi)=Z_k(a)\quad (\Phi\in G,\ k=1,2,\dots)
                                               \label{5.-2}
\end{equation}
for every positive function $a\in C^\infty({\mathbb S})$.

\begin{proposition} \label{P3.2}
Equality (\ref{5.-2}) holds for every function $a\in C^\infty({\mathbb S})$.
\end{proposition}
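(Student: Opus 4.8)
The plan is to exploit the fact that, for fixed $\Phi\in G$ and fixed $k$, the quantity $Z_k(a)$ is a $2k$-form in the Fourier coefficients of $a$ and $Z_k(a\Phi)$ is again a $2k$-form in the Fourier coefficients of $a$ (because $a\mapsto a\Phi$ is linear), so that $(\ref{5.-2})$ is in essence a polynomial identity. Equality $(\ref{5.-2})$ has already been established for every \emph{positive} $a\in C^\infty({\mathbb S})$ --- this is what $(\ref{3.1})$, i.e.\ Theorem \ref{T2.1}, gives --- and the positive functions form a nonempty open cone in the real vector space $C^\infty({\mathbb S};{\mathbb R})$. A polynomial identity that holds on a nonempty open subset of a real form of a complex vector space holds everywhere; the only real work is to make this precise in the present infinite-dimensional situation, which I would do by truncating to trigonometric polynomials and then passing to a limit.

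So fix $\Phi\in G$, $k\geq1$, and set $F(a)=Z_k(a\Phi)-Z_k(a)$ for $a\in C^\infty({\mathbb S})$. The map $a\mapsto a\Phi=(a\circ\varphi)\,|d\varphi/d\theta|^{-1}$, with $\varphi=\Phi|_{\mathbb S}$, is ${\mathbb C}$-linear and continuous from $C^\infty({\mathbb S})$ to itself, and $a\mapsto Z_k(a)$ is continuous on $C^\infty({\mathbb S})$ in view of the convergence estimates at the end of Section 2. The first step is to show that $F$ restricts to a polynomial function on the finite-dimensional complex space $V_N$ of trigonometric polynomials of degree $\leq N$. For $b\in V_N$ the function $b\Phi$ is a fixed ${\mathbb C}$-linear function of $({\hat b}_{-N},\dots,{\hat b}_N)$ with values in $C^\infty({\mathbb S})$, so each Fourier coefficient of $b\Phi$ is a linear form in $({\hat b}_{-N},\dots,{\hat b}_N)$ whose coefficients do not depend on $b$; substituting these linear forms into the absolutely convergent series $(\ref{2.3.1})$ and estimating by means of the bound $(\ref{2.4.1})$ for $Z_{j_1\dots j_{2k}}$ together with the rapid decay of the Fourier coefficients of the finitely many fixed smooth functions obtained by letting $\Phi$ act on $e^{in\theta}$, $|n|\leq N$, one checks that the rearranged series is a genuine homogeneous $2k$-form in $({\hat b}_{-N},\dots,{\hat b}_N)$. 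Since $Z_k|_{V_N}$ is plainly such a form, $F|_{V_N}$ is a polynomial on $V_N$.

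The next step is the passage from real to complex. The real trigonometric polynomials of degree $\leq N$ constitute a real form $V_{N,{\mathbb R}}$ of $V_N$, of real dimension $2N+1=\dim_{\mathbb C}V_N$, and the positive ones form a nonempty open subset of $V_{N,{\mathbb R}}$ on which $F$ vanishes, by the positive case of $(\ref{5.-2})$. Choosing a ${\mathbb C}$-basis of $V_N$ contained in $V_{N,{\mathbb R}}$, the polynomial $F|_{V_N}$ becomes a polynomial in $2N+1$ complex variables whose restriction to ${\mathbb R}^{2N+1}$ vanishes on a nonempty open set, hence is the zero polynomial, hence $F$ vanishes identically on $V_N$.

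Finally, for an arbitrary $a\in C^\infty({\mathbb S})$ let $S_N a$ denote the $N$th Fourier partial sum. Then $S_N a\in V_N$, so $F(S_N a)=0$ for every $N$; since $S_N a\to a$ in $C^\infty({\mathbb S})$, since $b\mapsto b\Phi$ is continuous on $C^\infty({\mathbb S})$, and since $Z_k$ is continuous there, we get $F(S_N a)\to F(a)$, and therefore $F(a)=0$, which is the assertion. I expect the only delicate point to be the claim in the second paragraph that $Z_k(a\Phi)$ is genuinely a \emph{polynomial}, and not merely a real-analytic function, of the Fourier coefficients of $a$ on each $V_N$ --- it is exactly this that licenses the real-to-complex step --- and it rests on combining the uniform polynomial bound $(\ref{2.4.1})$ with the Schwartz-type decay of the Fourier coefficients of smooth functions. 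The remaining ingredients (linearity and continuity of $a\mapsto a\Phi$, continuity of $Z_k$, density of trigonometric polynomials in $C^\infty({\mathbb S})$) are routine.
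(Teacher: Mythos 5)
Your proposal is correct and follows essentially the same route as the paper: both observe that $Q(a)=Z_k(a\Phi)-Z_k(a)$ is a continuous $2k$-form in the Fourier coefficients, that it vanishes on the open cone of positive functions inside the real subspace $C^\infty_{\mathbb R}({\mathbb S})$ by Theorem \ref{T2.1}, hence vanishes on all real functions, and then on all of $C^\infty({\mathbb S})$ because a $2k$-form is determined by its restriction to real functions. Your finite-dimensional truncation to trigonometric polynomials, the polynomial-identity step on $V_{N,{\mathbb R}}\subset V_N$, and the passage to the limit via $S_Na\to a$ are simply a more explicit implementation of the paper's appeal to continuity of the form and to its determination by real arguments.
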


\begin{proof}
For fixed $k$ and $\Phi\in G$, set $Q(a)=Z_k(a\Phi)-Z_k(a)$. By definition (\ref{2.3})--(\ref{2.4}), $Q$ is a $2k$-form on $C^\infty({\mathbb S})$. We have to prove that the form is identically equal to zero. We consider $C^\infty({\mathbb S})$ as a topological vector space with the $C^\infty$-topology. The form $Z_k$ is continuous as our estimates at the end of Section 2 show. The form $Q$ is also continuous. We know that $Q(a)=0$ for a positive function $a$. Positive functions constitute an open convex cone in the space $C^\infty_{\mathbb R}({\mathbb S})$ of real functions. If a continuous form vanishes on an open set, then it is identically equal to zero. Thus,  $Q(a)=0$ for a every real function $a$.

Obviously, any $2k$-form on $C^\infty({\mathbb S})$ is uniquely determined by its restriction to $C^\infty_{\mathbb R}({\mathbb S})$. Therefore $Q$ is identically equal to zero.
\end{proof}

We are going to demonstrate that the conformal invariance (\ref{5.-2}) is equivalent to some linear relations between the coefficients $Z_{j_1\dots j_{2k}}$ of form (\ref{2.3.1}).

The group $G$ is generated by three subgroups:

(1) The group of rotations $R_\alpha:z\mapsto e^{i\alpha}z$.

(2) The group with two elements $\{I,J\}$, where $I$ is the identity and $J:z\mapsto\bar z$ is the complex conjugation.

(3) The group $T=\{\Phi_\rho\mid -1<\rho<1\}$, where $\Phi_\rho$ is defined by formula (\ref{3.2}).
From the viewpoint of hyperbolic geometry, $\Phi_\rho$ is the translation of the hyperbolic plane $\Big(\mbox{Int}\,D,\ ds^2=\frac{|dz|^2}{(1-|z|^2)^2}\Big)$ along the real line $(-1,1)$ on the distance $t$ such that $\rho=\tanh t$. This means that $\Phi_\rho(x)\in(-1,1)$ for $x\in(-1,1)$ and $\mbox{dist}\,(x,\Phi_\rho(x))=t$, where $\mbox{dist}$ means the hyperbolic distance. The translation $\Phi_\rho$ has two fixed points $\pm 1$ at the infinite line  ${\mathbb S}$.

There is no problem with the first two subgroups: the invariant $Z_k(a)$ does not change if the function $a$ is transformed either by a rotation or by the conjugation. Indeed, in such the case the factor $|d\varphi/d\theta|$ on the right-hand side of (\ref{5.-3}) is identically equal to 1. Therefore (\ref{5.-2}) is equivalent in this case to
\begin{equation}
Z_k(a\circ R_\alpha)=Z_k(a),\quad Z_k(a\circ J)=Z_k(a).
                                               \label{3.3}
\end{equation}
The Fourier coefficients of $a\circ R_\alpha$ are expressed through Fourier coefficients of $a$ by the formula
$$
(\widehat{a\circ R_\alpha})_j=e^{i\alpha j}{\hat a}_j.
$$
From this
$$
(\widehat{a\circ R_\alpha})_{j_1}\dots(\widehat{a\circ R_\alpha})_{j_{2k}}=e^{i\alpha(j_1+\dots+j_{2k})}\,{\hat a}_{j_1}\dots {\hat a}_{j_{2k}}={\hat a}_{j_1}\dots {\hat a}_{j_{2k}}\quad\mbox{if}\quad j_1+\dots+j_{2k}=0
$$
and all summands in (\ref{2.3}) do not change when $a$ is replaced by $a\circ R_\alpha$. Similarly,
$$
(\widehat{a\circ J})_j={\hat a}_{-j}
$$
and the second of equalities (\ref{3.3}) is equivalent to (\ref{2.4''}).

Thus, it remains to consider a translation $\Phi_\rho$ defined by (\ref{3.2}).
Let $a\in C^\infty({\mathbb S})$ and $b=a\Phi_\rho$. By (\ref{4.1}), Fourier coefficients of the functions $a$ and $b$ are related by
$$
{\hat b}_n=\sum\limits_{k=-\infty}^\infty\mu_{nk}(\rho){\hat a}_k.
$$
Substituting this expression into the formula
$$
Z_k(b)=\sum\limits_{j_1,\dots, j_{2k}=-\infty}^\infty Z_{j_1\dots j_{2k}}\,{\hat b}_{j_1}\dots {\hat b}_{j_{2k}},
$$
we obtain
$$
Z_k(b)=\sum\limits_{\ell_1,\dots, \ell_{2k}=-\infty}^\infty\Big(\sum\limits_{j_1,\dots, j_{2k}=-\infty}^\infty Z_{j_1\dots j_{2k}}\,
\mu_{j_1\ell_1}(\rho)\dots\mu_{j_{2k}\ell_{2k}}(\rho)\Big){\hat a}_{\ell_1}\dots {\hat a}_{\ell_{2k}}.
$$
Since $a$ is an arbitrary function, the equality $Z_k(a)=Z_k(b)$ is equivalent to the statement:
\begin{equation}
\sum\limits_{j_1,\dots, j_{2k}=-\infty}^\infty Z_{j_1\dots j_{2k}}\,\mu_{j_1\ell_1}(\rho)\dots\mu_{j_{2k}\ell_{2k}}(\rho)=Z_{\ell_1\dots \ell_{2k}}.
                                               \label{5.0}
\end{equation}
This equality should hold for every $k=1,2,\dots$, for every integer indices $(\ell_1,\dots, \ell_{2k})$, and for every $\rho\in (-1,1)$.

Let us demonstrate that the series on the left-hand side of (\ref{5.0}) absolutely converges.
We fix the indices $(\ell_1,\dots, \ell_{2k})$, set $\ell=|\ell_1|+\dots +|\ell_{2k}|$, and estimate from above the absolute value of the left-hand side of (\ref{5.0}) by
$$
\sum\limits_{j_1,\dots,j_{2k}=\ell-1}^{\ell+1} Z_{j_1\dots j_{2k}}\,\left|\mu_{j_1\ell_1}(\rho)\dots\mu_{j_{2k}\ell_{2k}}(\rho)\right|
+\sum\limits_{j=\ell+2}^\infty\sum\limits_{|j_1|+\dots +|j_{2k}|=j} Z_{j_1\dots j_{2k}}\,\left|\mu_{j_1\ell_1}(\rho)\dots\mu_{j_{2k}\ell_{2k}}(\rho)\right|.
$$
The first sum is finite. So, we have to check the convergence of the second series. To this end we use  (\ref{2.4.1}) and (\ref{4.50}) to obtain for $|j_1|+\dots+ |j_{2k}|=j,\ |j_\alpha|\geq\ell+2\ (1\leq\alpha\leq2k)$
$$
\begin{aligned}
\left|Z_{j_1\dots j_{2k}}\,\mu_{j_1\ell_1}(\rho)\dots\mu_{j_{2k}\ell_{2k}}(\rho)\right|&\leq C_kj^{2k+1}C_{\ell_1}|j_1|^{|\ell_1|}|\rho|^{|j_1|/2}\dots
C_{\ell_{2k}}|j_{2k}|^{|\ell_{2k}|}|\rho|^{|j_{2k}|/2}\\ &\leq C_{k,\ell_1\dots\ell_{2k}}|\rho|^{j/2+2k+1}.
\end{aligned}
$$
From this
$$
\sum\limits_{j=\ell+2}^\infty\sum\limits_{|j_1|+\dots +|j_{2k}|=j} Z_{j_1\dots j_{2k}}\,\left|\mu_{j_1\ell_1}(\rho)\dots\mu_{j_{2k}\ell_{2k}}(\rho)\right|\leq
C_{k,\ell_1\dots\ell_{2k}}\sum\limits_{j=\ell+2}^\infty(j+1)^{4k+1}|\rho|^{j/2+2k+1}.
$$
The series on the right-hand side converges for $|\rho|<1$.

Equation (\ref{5.0}) obviously holds for $\rho=0$ since $M(0)=I$. We differentiate (\ref{5.0}) with respect to $\rho$. The differentiation can be justified with the help of the same estimates as have been used in the previous paragraph. In this way we obtain the following equation equivalent to (\ref{5.0}):
$$
\sum\limits_{j_1,\dots, j_{2k}=-\infty}^\infty Z_{j_1\dots j_{2k}}\sum\limits_{\alpha=1}^{2k}\,\mu_{j_1\ell_1}\dots\mu_{j_{\alpha-1}\ell_{\alpha-1}}
\frac{d\mu_{j_\alpha\ell_\alpha}}{d\rho}\,\mu_{j_{\alpha+1}\ell_{\alpha+1}}\dots\mu_{j_{2k}\ell_{2k}}=0.
$$
By Proposition \ref{P3.1},
$$
\frac{d\mu_{j_\alpha\ell_\alpha}}{d\rho}=\sum\limits_{p=-\infty}^\infty d_{j_\alpha p}\,\mu_{p\ell_\alpha},
$$
where the matrix $D=(d_{nk})$ is defined by (\ref{4.40}). Substitute this expression into the previous equation
$$
\sum\limits_{j_1,\dots, j_{2k}=-\infty}^\infty Z_{j_1\dots j_{2k}}\sum\limits_{\alpha=1}^{2k}\sum\limits_{p=-\infty}^\infty\,\mu_{j_1\ell_1}\dots\mu_{j_{\alpha-1}\ell_{\alpha-1}}
d_{j_\alpha p}\mu_{p\ell_\alpha}\,\mu_{j_{\alpha+1}\ell_{\alpha+1}}\dots\mu_{j_{2k}\ell_{2k}}=0.
$$
After transposing the summation indices $j_\alpha$ and $p$, this can be written in the form (again, the change of the summation order can be easily justified)
$$
\sum\limits_{j_1,\dots ,j_{2k}=-\infty}^\infty\left(\sum\limits_{\alpha=1}^{2k}\sum\limits_{p=-\infty}^\infty d_{pj_\alpha} Z_{j_1\dots j_{\alpha-1}pj_{\alpha+1}\dots j_{2k}}\right)\mu_{j_1\ell_1}\dots\mu_{j_{2k}\ell_{2k}}=0.
$$
Since the indices $(\ell_1,\dots,\ell_{2k})$ are arbitrary and the matrix $M=(\mu_{j\ell})$ is non-degenerate, this is equivalent to the equation
$$
\sum\limits_{\alpha=1}^{2k}\sum\limits_p d_{pj_\alpha} Z_{j_1\dots j_{\alpha-1}pj_{\alpha+1}\dots j_{2k}}=0.
$$
Substituting value (\ref{4.40}) of $d_{pj_\alpha}$, we obtain the final equation
\begin{equation}
\sum\limits_{\alpha=1}^{2k}\Big((j_\alpha-1) Z_{j_1\dots j_{\alpha-1},j_\alpha+1,j_{\alpha+1}\dots j_{2k}}-(j_\alpha+1) Z_{j_1\dots j_{\alpha-1},j_\alpha-1,j_{\alpha+1}\dots j_{2k}}\Big)=0
                                               \label{6.40}
\end{equation}
which should hold for all indices $(j_1,\dots,j_{2k})$. Conversely, if (\ref{6.40}) was proved, it would imply, together with (\ref{2.4''}), the validity of (\ref{5.-2}) for an arbitrary function $a$.

Equation (\ref{6.40}) can be simplified. The simplification relates to the Lee algebra of the group $G$.

\bigskip

Let us recall that we consider $G$ as a group of diffeomorphisms of the unit circle ${\mathbb S}=\{e^{i\theta}\}$. Therefore the Lee algebra $\mathfrak{g}$ of group $G$ coincides with a three-dimensional space of vector fields on ${\mathbb S}$. As one can easily see,  three vector fields
$$
X_0=\frac{\partial}{\partial\theta},\quad X_1=\cos\theta\frac{\partial}{\partial\theta},\quad X_2=\sin\theta\frac{\partial}{\partial\theta}.
$$
constitute the basis of $\mathfrak{g}$. The Lee product is expressed in the basis by the formulas
\begin{equation}
[X_0,X_1]=-X_2,\quad [X_0,X_2]=X_1,\quad [X_1,X_2]=X_0.
                                               \label{6.-2}
\end{equation}

The group $G$ acts on $C^\infty({\mathbb S})$ transforming a function $a$ to a conformally equivalent function as explained at the beginning of the current section.
Therefore the Lee algebra $\mathfrak{g}$ acts on $C^\infty({\mathbb S})$ too: a vector $A\in\mathfrak{g}$ is considered as a linear operator $A:C^\infty({\mathbb S})\rightarrow C^\infty({\mathbb S})$.  We are going to express the latter action in terms of Fourier coefficients.

We start with the subgroup of rotations $R\subset G$. A rotation acts by the formula $(a R_\alpha)(\theta)=a(e^{i\alpha\theta})$ which implies $(\widehat{a R_\alpha})_n=e^{in\alpha}{\hat a}_n$. Differentiating this equality with respect to $\alpha$ at $\alpha=0$, we obtain $\left.\frac{d}{d\alpha}\right|_{\alpha=0}(\widehat{a R_\alpha})_n=in{\hat a}_n$. We have thus found the first element of $\mathfrak{g}$:
\begin{equation}
(\widehat{Ca})_n=in{\hat a}_n.
                                               \label{6.-1}
\end{equation}

We have already found the element of $\mathfrak{g}$ corresponding the one-dimensional subgroup $T\subset G$. This is the operator $D:C^\infty({\mathbb S})\rightarrow C^\infty({\mathbb S})$ participating in Proposition \ref{P3.1}. By (\ref{4.40}), this operator acts in terms of Fourier coefficients as follows:
\begin{equation}
(\widehat{Da})_{n}=(n-2){\hat a}_{n-1}-(n+2){\hat a}_{n+1}.
                                               \label{6.4}
\end{equation}

To complete $(C,D)$ to a basis of $\mathfrak{g}$, we just evaluate the commutator of (\ref{6.-1}) and (\ref{6.4})
\begin{equation}
E=[C,D],\quad (\widehat{Ea})_{n}=-i\big[(n-2){\hat a}_{n-1}+(n+2){\hat a}_{n+1}\big].
                                               \label{6.0}
\end{equation}

The algebra product is expressed in the basic $(C,D,E)$ by the formulas
\begin{equation}
[C,D]=E,\quad [C,E]=-D,\quad [D,E]=-4C.
                                               \label{6.Lp}
\end{equation}
Formulas (\ref{6.-2}) and (\ref{6.Lp}) are equivalent, this is seen from the basis change
$$
C=X_0,\quad D=2X_2,\quad E=2X_1.
$$

We emphasize that $\mathfrak{g}$ is a real Lee algebra. In particular, the operators $C,D,E$ transform real functions again to real functions. Let ${\mathfrak{g}}_{\mathbb C}$ be the comlexification of $\mathfrak{g}$. The operators
$$
D_0=-iC,\quad D_-=\frac{1}{2}(D+iE),\quad D_+=\frac{1}{2}(-D+iE)
$$
constitute the basis of ${\mathfrak{g}}_{\mathbb C}$. In terms of Fourier coefficients, these operators are defined by the formulas
\begin{equation}
(\widehat{D_0a})_n=n{\hat a}_n,\quad (\widehat{D_-a})_{n}=(n-2){\hat a}_{n-1},\quad (\widehat{D_+a})_{n}=(n+2){\hat a}_{n+1}.
                                               \label{6.pm}
\end{equation}
The Lee product is expressed in this basis as
\begin{equation}
[D_0,D_-]=-D_-,\quad [D_0,D_+]=D_+,\quad [D_-,D_+]=2D_0.
                                               \label{6.L}
\end{equation}

\bigskip

Equation (\ref{6.40}) was actually obtained by differentiating the equality
$$
Z_k(a\Phi_\rho)=Z_k(a\,e^{tD})=Z_k(a)\quad (\tanh t=\rho)
$$
with respect to $t$. Repeating the same arguments for the equation
$$
Z_k(a\,e^{tE})=Z_k(a),
$$
we obtain
\begin{equation}
\sum\limits_{\alpha=1}^{2k}\Big((j_\alpha-1) Z_{j_1\dots j_{\alpha-1},j_\alpha+1,j_{\alpha+1}\dots j_{2k}}+(j_\alpha+1) Z_{j_1\dots j_{\alpha-1},j_\alpha-1,j_{\alpha+1}\dots j_{2k}}\Big)=0.
                                               \label{6.41}
\end{equation}
Taking the sum and difference of (\ref{6.40}) and (\ref{6.41}), we obtain the pair of simpler equations
\begin{equation}
\sum\limits_{\alpha=1}^{2k}(j_\alpha-1) Z_{j_1\dots j_{\alpha-1},j_\alpha+1,j_{\alpha+1}\dots j_{2k}}=0,
                                               \label{6.42}
\end{equation}
\begin{equation}
\sum\limits_{\alpha=1}^{2k}(j_\alpha+1) Z_{j_1\dots j_{\alpha-1},j_\alpha-1,j_{\alpha+1}\dots j_{2k}}=0.
                                               \label{6.43}
\end{equation}
Of course, equations (\ref{6.42}) and (\ref{6.43}) correspond to the operators $D_+,D_-\in{\mathfrak{g}}_{\mathbb C}$ as well as equations (\ref{6.40}) and (\ref{6.41}) correspond to $D,E\in{\mathfrak{g}}$.

We observe that equations (\ref{6.42}) and (\ref{6.43}) are equivalent modulo the evenness condition (\ref{2.4''}). Indeed, if we change signs of all indices $(j_1,\dots,j_{2k})$ in (\ref{6.43}) and use property (\ref{2.4''}), then we get (\ref{6.42}). Therefore equation (\ref{6.43}) can be eliminated from our considerations. Finally, (\ref{6.42}) is trivially valid in the case of
$j_1+\dots+j_{2k}\neq -1$ since, according to the definition in Section 2, $Z_{j_1\dots j_{2k}}=0$ for $j_1+\dots+j_{2k}\neq 0$. Thus, relations (\ref{6.42})--(\ref{6.43}) are reduced to the equation
\begin{equation}
\sum\limits_{\alpha=1}^{2k}(j_\alpha-1)Z_{j_1\dots j_{\alpha-1},j_\alpha+1,j_{\alpha+1},\dots j_{2k}}=0\quad (j_1+\dots +j_{2k}=-1).
                                               \label{6.12}
\end{equation}

{\bf Remark.} We have proved that equation (\ref{6.12}), together with the evenness condition (\ref{2.4''}), is equivalent to the conformal invariance (\ref{5.-2}) of the zeta-invariant $Z_k(a)$.
We emphasize that our proof is based on using Theorem \ref{T2.1}. Can equation (\ref{6.12}) be proved without using Steklov spectra, i.e., on the base of the definition (\ref{2.4}) and (\ref{2.3.3}) of the coefficients $Z_{j_1\dots j_{2k}}$? So far, we cannot find such a direct proof for a general $k$. The only exceptions are the cases of $k=1,2$. In the case of $k=1$, (\ref{6.12}) can be easily derived from Edward's formula (\ref{2.12'}). In the case of $k=2$, (\ref{6.12}) can be derived from explicit formulas for coefficients $Z_{ijk\ell}$ given by Theorem \ref{T7.1} below.

\section{Explicit formula for coefficients of the second zeta-invariant}

Edward's formula (\ref{2.12'}) means that coefficients of the quadratic form $Z_2(a)=\sum_i Z_{i,-i}{\hat a}_i{\hat a}_{-i}$ are expressed by a third degree piece-wise polynomial function in $i$,
$$
Z_{i,-i}=\left\{\begin{array}{ll}
\frac{1}{3}(i^3-i)\quad&\mbox{if}\quad i\geq 0,\\
\frac{1}{3}(-i^3+i)\quad&\mbox{if}\quad i\leq 0.
\end{array}\right.
$$
We emphasize also the following interesting circumstance: both polynomials participating in the formula are odd in $i$ while the coefficient $Z_{i,-i}$ is even. A similar statement on the second zeta-invariant sounds as follows:

\begin{theorem} \label{T7.1}
Coefficients of the 4-form
$$
Z_2(a)=\sum\limits_{i,l,k,\ell}Z_{ijk\ell}\, {\hat a}_i{\hat a}_j{\hat a}_k{\hat a}_\ell
$$
are completely determined by the following:

{\rm (1)} $Z_{ijk\ell}=0$ for $i+j+k+\ell\neq 0$;

{\rm (2)} $Z_{ijk\ell}$ are symmetric in $(i,j,k,\ell)$ and even: $Z_{-i,-j,-k,-\ell}=Z_{i,j,k,\ell}$;

{\rm (3)} $Z_{ijk,-i-j-k}$ is expressed through $(i,j,k)$ by the formula
\begin{equation}
Z_{ijk,-i\!-\!j\!-\!k} =\left\{
\begin{array}{llll}
P_{1}(i,j,k)&  \mbox{if}& i\geq 0,\ j\geq0,\ k\geq0;\\
P_{2}(i,j,k)&  \mbox{if}& i\leq0,\ j\geq0,\ k\geq0,\ i+j\leq 0,\ i+k\leq 0,\ i+j+k\geq 0
\end{array}
\right.
                                      \label{7.81}
\end{equation}
where polynomials $P_1$ and $P_2$ are defined by the equalities
\begin{equation}
P_{1}(i,j,k)
=\frac{1}{15}\sigma_{(ijk)}\big(3i^5+15i^4j+10i^3j^2+10i^3jk
-5i^3-25i^2j-10ijk+2i\big),
                              \label{7.82}
\end{equation}
\begin{equation}
\begin{aligned}
P_{2}(i,j,k)=\frac{1}{45}\sigma_{(jk)}\Big(&5i^5+25i^4j+10i^3j^2+20i^3jk-10i^2j^3-15ij^4-20ij^3k\\
&-4j^5-5j^4k+10j^3k^2
-5i^3-15i^2j+5ij^2-5j^2k+4j\Big).
\end{aligned}
                              \label{7.82'}
\end{equation}
Here $\sigma_{(ijk)}$ ($\sigma_{(jk)}$) stands for the symmetrization in indices $(i,j,k)$ (in indices $(j,k)$).
\end{theorem}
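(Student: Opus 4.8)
The plan is to get parts (1) and (2) for free from the definitions and to reduce part (3) to a finite, if bulky, computation with Faulhaber sums over lattice intervals. Parts (1) and (2) are immediate: $Z_{ijk\ell}=0$ for $i+j+k+\ell\neq0$ and the symmetry of $Z_{ijk\ell}$ in $(i,j,k,\ell)$ hold by the very definition (\ref{2.3.3}), and evenness is (\ref{2.4''}).

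For (3), I start from the following description of the coefficients $N_{j_1j_2j_3j_4}$ of (\ref{2.4}). Fix $(i,j,k)$, set $\ell=-i-j-k$, and let $f(n)=n(n+i)(n+i+j)(n+i+j+k)$ as in (\ref{2.4.2}); this monic quartic has the four real roots $0,-i,-(i+j),\ell$. Since $|f(n)|-f(n)$ equals $-2f(n)$ where $f(n)<0$ and $0$ elsewhere, and $f<0$ precisely on the two bounded intervals cut out by the non-decreasing rearrangement $R_1\le R_2\le R_3\le R_4$ of those four roots, formula (\ref{2.4}) becomes
\[
N_{ijk,-i-j-k}=-2\Bigl(\sum_{n=R_1+1}^{R_2-1}f(n)+\sum_{n=R_3+1}^{R_4-1}f(n)\Bigr).
\]
The order $R_1\le\dots\le R_4$ is constant on each chamber of the complement of the six hyperplanes $i=0$, $j=0$, $k=0$, $i+j=0$, $j+k=0$, $i+j+k=0$; on such a chamber each sum is a Faulhaber sum of a quartic between limits that are integer-linear in $(i,j,k)$, hence a polynomial of degree $5$. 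Thus $N_{ijk,-i-j-k}$ is chamber-wise polynomial of degree $5$, and these polynomials are what one actually computes.

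Next I symmetrize. The quantity $N_{j_1j_2j_3j_4}$ depends only on the configuration of its four roots up to integer translation and reflection, hence is invariant under the dihedral group of order $8$ generated by the cyclic shift (\ref{2.4.3}) and the reversal $N_{j_1j_2j_3j_4}=N_{j_4j_3j_2j_1}$. Averaging $N$ over $\Pi_4$ as in (\ref{2.3.3}) therefore collapses to $Z_{ijk\ell}=\frac13\bigl(N_{ijk\ell}+N_{ij\ell k}+N_{ikj\ell}\bigr)$, the three surviving terms being the three necklaces of $\{i,j,k,\ell\}$. On the octant $\{i\ge0,j\ge0,k\ge0\}$ each of these three $N$'s is a single polynomial (the octant lies in one chamber for each of the three relevant arrangements), and I expect their average, after simplification, to be exactly $P_1$ of (\ref{7.82}); the symmetrizer $\sigma_{(ijk)}$ is forced because $Z_{ijk\ell}$ is symmetric in $(i,j,k)$ on the octant. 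Running the same computation at a point of the region in the second line of (\ref{7.81}) should give $P_2$, with $\sigma_{(jk)}$ forced by the residual $j\leftrightarrow k$ symmetry. It then remains to check that every chamber lies in the $\Pi_4\times\{\pm1\}$-orbit of one of these two regions and that $P_1$ and $P_2$ agree where their symmetric extensions overlap; together with (1)--(2) this shows the stated data determine $Z_{ijk\ell}$ consistently.

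The only genuine obstacle is the bookkeeping volume --- the $24$ chambers (orderings of $0,-i,-(i+j),\ell$), the three necklaces, and several Faulhaber sums apiece --- and the need to witness that all the degree-$5$ chamber polynomials collapse after symmetrization to the two compact formulas (\ref{7.82})--(\ref{7.82'}); there is no conceptual difficulty, and the computation is most safely performed with a computer algebra system. Natural consistency checks along the way are the dihedral invariance of $N$, the conformal relation (\ref{6.12}) for $k=2$ (which is in fact deduced from this theorem in the remark following (\ref{6.12})), and the observation that both $P_1$ and $P_2$ come out odd, in analogy with the two odd pieces of $Z_{i,-i}$ in Edward's formula (\ref{2.12'}).
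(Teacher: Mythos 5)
Your proposal follows essentially the same route as the paper's proof: write $N_{ijk\ell}$ as $-2$ times Faulhaber sums of the quartic $f$ over the two intervals between its ordered roots, reduce to the two regions of \eqref{7.81} via the permutation and sign symmetries (the content of Lemma \ref{L7.3}), symmetrize, and finish the degree-$5$ polynomial bookkeeping by computer algebra. The only variation is that you exploit the reversal invariance of $N$ (dihedral group of order $8$) to collapse the symmetrization to three necklace terms, where the paper uses only cyclic invariance, via \eqref{2.3.4}, to get the six-term average \eqref{7.12} --- an equivalent simplification.
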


We emphasize that $P_1$ and $P_2$ are fifth degree polynomials and they are odd, i.e.,
$ P_r(-i,-j,-k)=-P_r(i,j,k)\ (r=1,2)$. Besides this, the polynomials possess interesting positiveness and divisibility properties since $3Z_{ijk\ell}$ is a non-negative even integer.
Most probably, the same statement is true for higher order invariants $Z_k$: coefficients $Z_{j_1\dots j_{2k\!-\!1},-j_1\!-\!\dots\!-\!j_{2k\!-\!1}}$ of $2k$-form (\ref{2.3.1}) are expressed by a piece-wise polynomial function in $(j_1,\dots,j_{2k-1})$ represented by odd polynomials of degree $2k+1$. Unfortunately, for $k>2$, these polynomials are too complicated to be really useful.

To prove Theorem \ref{T7.1}, we need the following

\begin{lemma} \label{L7.3}
Modulo statements (1) and (2) of Theorem \ref{T7.1}, all coefficients $Z_{ijk\ell}$ are completely determined by the following:

values of  $Z_{ijk,-i\!-\!j\!-\!k}\ \mbox{for}\ i\geq 0,\ j\geq0,\ k\geq0 \quad (\mbox{\rm \bf Case 1})$;

values of $Z_{ijk,-i\!-\!j\!-\!k}\ \mbox{for}\ i\leq0,\ j\geq0,\ k\geq0,\ i+j\leq 0,\ i+k\leq 0,\ i+j+k\geq 0\ (\mbox{\rm \bf Case 2})$.
\end{lemma}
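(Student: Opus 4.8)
The plan is to exhibit, for every index quadruple $(i,j,k,\ell)$ with $i+j+k+\ell=0$, a representative obtained from it by a permutation and possibly a global sign change whose first three entries lie in the Case~1 region or the Case~2 region; since $Z_{ijk\ell}$ is invariant under these two operations by (1) and (2), this shows that the values listed in the lemma determine all coefficients $Z_{ijk\ell}$.

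First I would record the reductions coming from (1) and (2). By (1) we may assume $i+j+k+\ell=0$, as otherwise $Z_{ijk\ell}=0$. By (2), the number $Z_{ijk\ell}$ depends only on the multiset $\{i,j,k,\ell\}$ and is unchanged under $(i,j,k,\ell)\mapsto(-i,-j,-k,-\ell)$. So it suffices to prove the purely combinatorial statement: a zero-sum quadruple of integers can always be permuted, after possibly negating all entries, so that its first three entries $(i,j,k)$ satisfy either $i,j,k\geq 0$ (Case~1) or $i\leq 0$, $j,k\geq 0$, $i+j\leq 0$, $i+k\leq 0$, $i+j+k\geq 0$ (Case~2).

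Next I would run a short case analysis on the number $q$ of strictly negative entries. Because the entries sum to zero, a global sign change sends $q\in\{3,4\}$ to $q\in\{0,1\}$ (the value $q=4$ never occurring, since four strictly negative integers cannot sum to zero), so we may assume $q\leq 2$. If $q\leq 1$, permuting the (at most one) negative entry into the last slot leaves $i,j,k\geq 0$, which is Case~1. If $q=2$, put $m=\max\{|i|,|j|,|k|,|\ell|\}$; after a global sign change if needed we may assume some entry equals $-m$. Permute so that $i=-m$, so that the two nonnegative entries become $j,k$, and so that the remaining (negative) entry is $\ell$. Then $i\leq 0$ and $j,k\geq 0$; from $|j|,|k|\leq m$ we get $i+j\leq 0$ and $i+k\leq 0$; and, using $i+j+k+\ell=0$ together with $\ell<0$, we get $i+j+k=-\ell>0$, which is the remaining Case~2 inequality. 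This completes the reduction, and hence the lemma.

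The argument is elementary, and I do not anticipate a genuine obstacle; the only place that requires attention is the last Case~2 inequality $i+j+k\geq 0$, which holds precisely because the zero-sum constraint forces the two nonnegative entries to compensate $m=|i|$ with a strict surplus coming from the second negative entry $\ell$. Along the way one should double-check the degenerate configurations (entries equal to $0$, several entries of absolute value $m$, and the all-zero quadruple): in the $q=2$ case the estimate $|\,\cdot\,|\leq m$ together with zero sum in fact forces both nonnegative entries to be strictly positive, so the partition into two negatives and two positives is genuine, while every remaining degenerate configuration falls under $q\leq 1$ and hence Case~1.
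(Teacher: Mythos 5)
Your strategy is essentially the paper's: reduce, via statements (1) and (2), to showing that every zero-sum quadruple can be permuted and globally negated into Case 1 or Case 2, treating the mixed-sign situation by making $|i|$ maximal and reading off the inequalities $i+j\le 0$, $i+k\le 0$, $i+j+k\ge 0$. That part is fine. However, there is a concrete error in how you handle zeros in the two-negative branch. Your closing claim -- that when exactly two entries are strictly negative, the zero-sum condition forces both nonnegative entries to be strictly positive -- is false: $(-3,-2,0,5)$ has exactly two strictly negative entries and a zero among the nonnegative ones. And it is precisely for such quadruples that your $q=2$ recipe breaks down as written: the maximal absolute value ($5$ here) is attained only by the positive entry, so the global sign change is forced, giving $(3,2,0,-5)$; this has three nonnegative entries and no strictly negative entry other than $i=-5$, so the instruction ``the remaining (negative) entry is $\ell$'' has no referent, and the step $i+j+k=-\ell>0$ cannot be justified by $\ell<0$ (indeed, a careless choice such as $j=3$, $k=0$, $\ell=2$ would violate Case 2).

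The repair is immediate, and either of two one-line fixes works. Either note that a zero-sum quadruple with two strictly negative entries and a zero has three entries $\le 0$, hence is sent to Case 1 by a global sign change and a permutation -- this is exactly how the paper's dichotomy absorbs these quadruples, via the convention that $0$ carries both signs, so that ``three elements of the same sign'' covers them. Or, in your $q=2$ recipe, after arranging $i=-m$, take $\ell$ to be the smallest of the three remaining entries: it is always $\le 0$ (after the sign change the entry coming from the original non-maximal nonnegative entry is $\le 0$), and then $i+j+k=-\ell\ge 0$, which is all that the non-strict Case 2 inequality requires, while $|j|,|k|\le m$ still gives $i+j\le 0$ and $i+k\le 0$. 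With that correction your argument is complete and coincides with the proof in the paper.
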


\begin{proof}
We consider the set of all quadruples of integers $(i,j,k,\ell)$ satisfying $i+j+k+\ell=0$. The set is the union of the following two subsets:

(a) the set of all quadruples $(i,j,k,\ell)$ such that three elements of the quadruple have the same sign (on assuming that 0 has both signs);

(b) the set of all quadruples $(i,j,k,\ell)$ such that two elements of the quadruple are non-negative and two other elements are non-positive.

In virtue of statements (1) and (2) of the theorem, we can permute elements of the quadruple and can change their signs simultaneously. In case (a), we use this ambiguity to get
$i\geq0,\ j\geq0,\ k\geq0$. This is exactly case 1 of the lemma.

In case (b), we use the ambiguity to get
\begin{equation}
i\leq0,\quad |i|=\max\{|i|,|j|,|k|,|\ell|\}.
                              \label{7.83}
\end{equation}
Now, two elements of the triple $(j,k,\ell)$ are non-negative and one element is non-positive. We permute elements of the triple so that
\begin{equation}
j\geq0,\quad k\geq0,\quad \ell\leq0.
                              \label{7.84}
\end{equation}
A simple arithmetic analysis shows that the union of conditions (\ref{7.83}) and (\ref{7.84}) is equivalent to
\begin{equation}
i\leq0,\ j\geq0,\ k\geq0,\ i+j\leq 0,\ i+k\leq 0,\ i+j+k\geq 0,\quad \ell=-(i+j+k).
                              \label{7.85}
\end{equation}
This is exactly case 2 of the lemma.
\end{proof}

\begin{proof}[Proof of Theorem \ref{T7.1}.]
The detailed proof involves a number of routine but rather cumbersome calculations with polynomials (multiplication of two polynomials, grouping similar terms in a polynomial). We implemented such calculations on a computer with the help of the symbolic calculations package MAPLE. The calculations are omitted in the proof presented below.

We introduce the notation
\begin{equation}
\{x\}=|x|-x=\left\{\begin{array}{ll}
0\quad&\mbox{for}\quad x\geq 0,\\
-2x&\mbox{for}\quad x< 0.
\end{array}\right.
                                               \label{6.15'}
\end{equation}
Let us fix $(i,j,k,\ell)$ satisfying $i+j+k+\ell=0$ and define the polynomial
\begin{equation}
f(n)=n(n+i)(n+i+j)(n+i+j+k).
                                               \label{7.0}
\end{equation}
Then formula (\ref{2.4}) can be written as
\begin{equation}
N_{ijk\ell}=\sum\limits_n\{f(n)\}.
                                               \label{7.1}
\end{equation}
Roots of the polynomial $f$ are elements of the set $\{0,-i,-i-j,-i-j-k\}$. Let $(r_1,r_2,r_3,r_4)$ be the sequence of the roots ordered by their values, i.e.,
$$
\{r_1,r_2,r_3,r_4\}=\{0,-i,-i-j,-i-j-k\},\quad r_1\leq r_2\leq r_3\leq r_4.
$$
Formula (\ref{7.1}) can be rewritten as
\begin{equation}
N_{ijk\ell}=-2\sum\limits_{n=r_1}^{r_2}f(n)-2\sum\limits_{n=r_3}^{r_4}f(n).
                                               \label{7.2}
\end{equation}
Transform (\ref{7.0}) to the form
\begin{equation}
f(n)=n^4+\alpha_1 n^3+\alpha_2 n^2+\alpha_3 n,
                                               \label{7.3}
\end{equation}
where
\begin{equation}
\alpha_1=3i+2j+k,\quad\alpha_2=3i^2+4ij+2ik+j^2+jk,\quad\alpha_3=i^3+2i^2j+i^2k+ij^2+ijk.
                                               \label{7.4}
\end{equation}

Now, we are going to evaluate the first sum on the right-hand side of (\ref{7.2}).
Let us first assume that $0\leq r_1\leq r_2$. Then
$$
\sum\limits_{n=r_1}^{r_2}f(n)=\sum\limits_{n=0}^{r_2}f(n)-\sum\limits_{n=0}^{r_1}f(n).
$$
We have used that $f(r_1)=0$. Substitute  value (\ref{7.3}) into the last formula
\begin{equation}
\sum\limits_{n=r_1}^{r_2}f(n)=\sum\limits_{n=0}^{r_2}n^4-\sum\limits_{n=0}^{r_1}n^4
+\alpha_1\Big(\sum\limits_{n=0}^{r_2}n^3-\sum\limits_{n=0}^{r_1}n^3\Big)
+\alpha_2\Big(\sum\limits_{n=0}^{r_2}n^2-\sum\limits_{n=0}^{r_1}n^2\Big)
+\alpha_3\Big(\sum\limits_{n=0}^{r_2}n-\sum\limits_{n=0}^{r_1}n\Big).
                                               \label{7.5}
\end{equation}
Using (\ref{2.15}) and the similar formulas \cite[Section 4.1.1]{PBM}
$$
\sum\limits_{n=0}^{r}n^3=\frac{1}{4}r^2(r+1)^2,\quad \sum\limits_{n=0}^{r}n^4=\frac{1}{30}r(r+1)(2r+1)(3r^2+3r-1),
$$
we obtain from (\ref{7.5})
\begin{equation}
\sum\limits_{n=r_1}^{r_2}f(n)=\varphi(r_2)-\varphi(r_1),
                                               \label{7.10}
\end{equation}
where
\begin{equation}
\varphi(r)=r(r+1)\Big[\frac{1}{30}(2r+1)(3r^2+3r-1)+\frac{\alpha_1}{4}r(r+1)+\frac{\alpha_2}{6}(2r+1)+\frac{\alpha_3}{2}\Big]
                                               \label{7.9}
\end{equation}
is the discrete antiderivative of $f(n)$. As one can easily see, (\ref{7.10}) holds also in two other cases when either $r_1\leq 0\leq r_2$ or $r_1\leq r_2\leq 0$.
Thus, (\ref{7.10}) is an universal formula, i.e., it is valid for all values of the roots $r_1\leq r_2\leq r_3\leq r_4$. Of course, a similar formula holds for the second sum on the right-hand side of (\ref{7.2}).

We substitute (\ref{7.10}) and the similar expression  for the second sum into (\ref{7.2})
\begin{equation}
N_{ijk\ell}=2\big(\varphi(r_1)-\varphi(r_2)+\varphi(r_3)-\varphi(r_4)\big).
                                               \label{7.11}
\end{equation}

Next, we are going to symmetrize formula (\ref{7.11}) in the indices
$(i,j,k)$ in order to obtain a formula for $Z_{ijk\ell}\
(i+j+k+\ell=0)$. To this end we use formula (\ref{2.3.4}) that
is reproduced here in the form
\begin{equation}
3Z_{ijk\ell}=\frac{1}{2}\big(N_{ijk\ell}+N_{ikj\ell}+N_{jik\ell}+N_{jki\ell}+N_{kij\ell}+N_{kji\ell}\big)\quad
(i+j+k+\ell=0).
                                               \label{7.12}
\end{equation}

The main difficulty
relates to the following circumstance: the roots $(r_1,r_2,r_3,r_4)$ must be expressed
through the indices $(i,j,k)$. This expression has different forms
in different cases. In virtue of Lemma \ref{L7.3}, it suffices to consider two cases mentioned in the lemma.

{\bf Case 1.} Assume that $i\geq 0,\ j\geq 0,\ k\geq 0$. Then
\begin{equation}
r_1=-i-j-k,\quad r_2=-i-j,\quad r_3=-i,\quad r_4=0.
                                               \label{7.14}
\end{equation}
Substituting these values into (\ref{7.9}), we express $\varphi(r_m)\ (1\leq m\leq4)$ through $(i,j,k)$. Then we substitute the expressions for $\varphi(r_m)$ into (\ref{7.11}) to obtain a formula expressing $N_{ijk\ell}$ as a fifth degree polynomial in the variables $(i,j,k)$.
Finally, we symmetrize the polynomial, i.e., substitute it into (\ref{7.12}). It is important to note that, in case 1, we do not
need to take care of the order of roots $(r_1,r_2,r_3,r_4)$ in
different terms on the right-hand side of (\ref{7.12}); the order
will be automatically changed in the right way.
For example, the
second term $N_{ikj\ell}$ is obtained from the first term
by transposition of indices $(j,k)$. For this term
$$
r_1=-i-j-k,\quad r_2=-i-k,\quad r_3=-i,\quad r_4=0.
$$
These formulas are obtained from (\ref{7.14}) by the same
transposition.
Finally, we arrive to the equality $Z_{ijk,-i\!-\!j\!-\!k} =P_1(i,j,k)$, where $P_1$ is defined by (\ref{7.82}).

{\bf Case 2.} Assume that $i\leq 0,\ j\geq 0,\ k\geq 0, \ i+j\leq 0,\ i+k\leq 0,\ i+j+k\geq 0$. In this case, the roots $(r_1,r_2,r_3,r_4)$ take different values for different terms on the right-hand side of (\ref{7.12}). Namely,
$$
\begin{aligned}
r_1&=-i-j-k,\ r_2=0,\ r_3=-i-j,\ r_4=-i\quad&\mbox{for}\quad N_{ijk\ell};\\
r_1&=-j,\ r_2=-i-j-k,\ r_3=0,\ r_4=-i-j\quad&\mbox{for}\quad N_{jik\ell};\\
r_1&=-j-k,\ r_2=-j,\ r_3=-i-j-k,\ r_4=0\quad&\mbox{for}\quad N_{jki\ell}.
\end{aligned}
$$
Similar formulas for other three terms on the right-hand side of (\ref{7.12}) are obtained by transposing the indices $(j,k)$ here.
Using these values, we repeat our calculations and arrive to the equality $Z_{ijk,-i\!-\!j\!-\!k} =P_2(i,j,k)$, where $P_2$ is defined by (\ref{7.82'}).
\end{proof}

\section{Some open questions}

Let us recall our main problem posed in Section 1: given a positive function $b\in C^\infty({\mathbb S})$, one has to find all positive functions $a\in C^\infty({\mathbb S})$ satisfying $\mbox{Sp}\,(a\Lambda_e)=\mbox{Sp}\,(b\Lambda_e)$. Zeta-invariants allow us to write down the infinite system of equations
\begin{equation}
Z_k(a)=b_k\quad(k=1,2,\dots)
                          \label{14.-1}
\end{equation}
in the Fourier coefficients of the function $a$, where $b_k=Z_k(b)$. This reduces our problem to the algebraic problem of studying system (\ref{14.-1}) (if equations with formal power series are considered as algebraic equations).
The principle question on zeta-invariants is the following one: are the invariants $Z_k(a)\ (k=1,2,\dots)$ independent of each other, i.e., does system (\ref{14.-1})
give us infinitely many conditions on the Fourier coefficients of a function $a$? We believe this is true but cannot prove so far.
The first and second zeta-invariants are independent. Indeed, by (\ref{2.13}), $Z_1(a)$ is independent of $({\hat a}_0,{\hat a}_{\pm 1})$. On the other hand, the 4-form $Z_2(a)$ contains summands of the form ${\hat a}_0^2{\hat a}_ka_{-k}$ with non-zero coefficients, as one can see with the help of Theorem \ref{T7.1}.

\bigskip

As one can easily see, $Z_k(a)=0\ (k=1,2,\dots)$ for every function $a$ belonging to the three-dimensional subspace
$$
L=\{a\in C^\infty({\mathbb S})\mid a(\theta)={\hat a}_{0}+{\hat a}_{1}e^{i\theta}+{\hat a}_{-1}e^{-i\theta}\}
$$
of the space $C^\infty({\mathbb S})$. Indeed, as is seen from (\ref{2.4}), $N_{j_1\dots j_{2k}}=0$ if each of indices $(j_1,\dots, j_{2k})$ is equal either to zero or to $\pm 1$.
The converse statement is true in the case of $k=1$ for real functions: if $Z_1(a)=0$ for a real function $a\in C^\infty({\mathbb S})$, then $a\in L$. This is seen from Edward's formula (\ref{2.13}) that takes the following form in the case of a real function $a$:
\begin{equation}
Z_1(a)=\frac{2}{3}\sum\limits_{n=2}^\infty(n^3-n)\, |{\hat a}_n|^2.
                                               \label{14.0}
\end{equation}
How does the set of all (real) functions $a\in C^\infty({\mathbb S})$ satisfying $Z_k(a)=0$ for $k=2,3,\dots$ look like, can it be essentially different of $L$?

\bigskip

As is seen from (\ref{14.0}), the estimate
$$
Z_1(a)\geq c_1\sum\limits_{n\geq2}n^3|{\hat a}_n|^2
$$
with some universal constant $c_1>0$ holds
for every real function $a\in C^\infty(\gamma)$.

\begin{problem} \label{P14.1}
Does the inequality
\begin{equation}
Z_k(a)\geq c_k\sum\limits_{n\geq2}n^{2k+1}|{\hat a}_n|^{2k}
                          \label{14.1}
\end{equation}
hold for every real function $a\in C^\infty({\mathbb S})$ and for every $k=2,3,\dots$, where the coefficient $c_k>0$ depends on $k$ only? If the answer is "no", the same question can be asked for positive functions $a$.
\end{problem}

So far, even the inequality $Z_k(a)\geq0\ (k=2,3,\dots)$ remains unproved for a real  $a$. Along with (\ref{14.0}), the following ``naive'' argument can be mentioned to justify the inequality:
by (\ref{2.1}), $Z_k(a)=\zeta_a(-2k)=\mbox{Tr}(B^2)$ for some self-dual operator $B$.
We checked numerically the inequality $Z_2(a)\geq0$ for many functions that were chosen by a more or less random choice of Fourier coefficients satisfying $\overline{{\hat a}_{n}}={\hat a}_{-n}$ and ${\hat a}_{n}=0$ for $|n|>n_0$ with some $n_0$. The inequality holds in all considered cases.

\bigskip

Compactness theorems of the following kind are popular in Spectral Geometry (see \cite{BPP} and references there): a family of Riemannian manifolds (satisfying some additional conditions) whose Laplacians have the same eigenvalue spectrum is (pre)compact in an appropriate topology. Let us discuss one of possible compactness theorems for the Steklov spectrum. Of course, the conformal equivalence should be taken into account since the conformal group is non-compact.

Let us remind that the Hilbert space $H^s({\mathbb S})$ is the completion of $C^\infty({\mathbb S})$ with respect to the norm
$$
\|a\|_{H^s({\mathbb S})}^2=\sum\limits_n(1+|n|^{2s})|{\hat a}_n|^2.
$$
In our opinion, $\|a\|_{H^{3/2}({\mathbb S})}$ is the most appropriate norm for studying the compactness. Indeed, as is seen from (\ref{2.13}),
$$
\|a\|_{H^{3/2}({\mathbb S})}^2\sim |{\hat a}_{0}|^2+|{\hat a}_{1}|^2+Z_1(a)
$$
for a real function $a$.

Let us consider a sequence of positive functions $a^\nu\in C^\infty({\mathbb S})\ (\nu=1,2,\dots)$ such that the Steklov spectrum $\mbox{\rm Sp}\,(a^\nu\Lambda_e)$ is independent of $\nu$.
As is seen from (\ref{14.0}), the estimate
\begin{equation}
|{\hat a}^\nu_n|\leq C|n|^{-3/2}\quad\mbox{при}\quad |n|\geq2
                          \label{14.2}
\end{equation}
holds with some constant $C$ independent of $\nu$. Therefore the sequence $|{\hat a}^\nu_n|\ (\nu=1,2,\dots)$ is bounded for every $|n|\geq2$. The positiveness of $a^\nu$ implies the inequality $|{\hat a}^\nu_1|\leq{\hat a}^\nu_0$. Thus, the only obstruction to the boundedness of the sequence of norms $\|a^\nu\|_{H^{3/2}({\mathbb S})}\ (\nu=1,2,\dots)$ is the possible unboundedness of the sequence ${\hat a}^\nu_0\ (\nu=1,2,\dots)$. The latter sequence can be unbounded as easy examples show. We try to overrun the obstruction by replacing each function $a^\nu$ with some conformally equivalent function. In this way we arrive to the statement:

\begin{theorem} \label{Tcomp}
Let $a^\nu\in C^\infty({\mathbb S})\ (\nu=1,2,\dots)$ be a sequence of functions uniformly bounded from below by some positive constant
$$
a^\nu(\theta)\geq c>0.
$$
Assume the Steklov spectrum $\mbox{\rm Sp}\,(a^\nu\Lambda_e)$ to be independent of $\nu$. Then there exists a subsequence $a^{\nu_k}$ such that every function $a^{\nu_k}$ is conformally equivalent to some function $b^{k}\in C^\infty({\mathbb S})$ and the sequence of norms $\|b^{k}\|_{H^{3/2}({\mathbb S})}$ is bounded. Hence, for every $s<3/2$, the sequence $b^{k}$ contains a subsequence converging in $H^s({\mathbb S})$.
\end{theorem}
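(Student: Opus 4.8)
The plan is to extract from the isospectrality only the invariance of the first zeta-invariant $Z_1$, to use it to control all Fourier modes of index $|n|\ge2$ of an arbitrary conformal representative, and then, in each conformal orbit, to single out the representative that minimizes ${\hat a}_0$. The key point will be that such a minimizer automatically has vanishing mode $n=\pm1$, after which positivity together with (\ref{2.0}) bounds ${\hat a}_0$ with no further work.

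First, the spectral input. Since $\mbox{Sp}(a^\nu\Lambda_e)$ is independent of $\nu$, the Weyl asymptotics of its eigenvalues (recalled in Section~2) are independent of $\nu$, hence so is $\frac1{2\pi}\int_0^{2\pi}d\theta/a^\nu$. Rescaling all the $a^\nu$ by this common value — which keeps a uniform positive lower bound and keeps the spectra equal to one another — I may assume that (\ref{2.0}) holds for every $\nu$. Then Theorem~\ref{T2.1} gives $Z_1(a^\nu)=\zeta_{a^\nu}(-2)=:M_1$, a constant, and by Proposition~\ref{P3.2} every function conformally equivalent to any $a^\nu$ also has $Z_1=M_1$. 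By Edward's formula (\ref{14.0}), every real $u\in C^\infty({\mathbb S})$ with $Z_1(u)=M_1$ satisfies $\sum_{n\ge2}n^3|{\hat u}_n|^2\le 2M_1$ (using $n^3-n\ge\tfrac34 n^3$ for $n\ge2$), hence $\sum_{|n|\ge2}|{\hat u}_n|\le C_0$ and $\sum_{|n|\ge2}(1+|n|^3)|{\hat u}_n|^2\le 8M_1$ for a constant $C_0$ depending only on $M_1$; in particular the "high part" $u-{\hat u}_0-{\hat u}_1e^{i\theta}-{\hat u}_{-1}e^{-i\theta}$ has sup-norm $\le C_0$. So it remains only to bound ${\hat b}_0$ and ${\hat b}_1$ for a suitable conformal representative $b$.

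Second, the choice of representative. The group $G$ acts on $C^\infty({\mathbb S})$ by (\ref{5.-3}). Writing $b=a^\nu\Phi$ and changing the integration variable $u=\varphi(\theta)$, with $\varphi=\Phi|_{\mathbb S}$, one computes $\widehat{(a^\nu\Phi)}_0=\frac1{2\pi}\int_0^{2\pi}a^\nu(u)\,|\chi'(u)|^2\,du$, where $\chi=\Phi^{-1}|_{\mathbb S}$ and $\chi'$ is the circle-derivative of $\chi$; for a M\"obius $\Phi$ one has $|\chi'|=$ the Poisson kernel with pole $\Phi(0)$, so $\frac1{2\pi}\int_0^{2\pi}|\chi'|^2=\frac{1+|\Phi(0)|^2}{1-|\Phi(0)|^2}$. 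Since $a^\nu\ge c>0$, the continuous function $\Phi\mapsto\widehat{(a^\nu\Phi)}_0$ on $G$ is therefore $\ge c\,\frac{1+|\Phi(0)|^2}{1-|\Phi(0)|^2}$, which tends to $+\infty$ as $\Phi(0)\to\partial{\mathbb D}$ (the rotation/reflection directions of $G$ being compact); so this function is proper and attains its minimum at some $\Phi^\nu\in G$. Set $b^\nu:=a^\nu\Phi^\nu$: a positive real function in $C^\infty({\mathbb S})$, conformally equivalent to $a^\nu$, with $\frac1{2\pi}\int d\theta/b^\nu=1$ and $Z_1(b^\nu)=M_1$. At the minimum the derivative of $\Phi\mapsto\widehat{(a^\nu\Phi)}_0$ vanishes along the one-parameter subgroups $e^{tD}$ and $e^{tE}$ of $G$, i.e.\ $\widehat{(Db^\nu)}_0=\widehat{(Eb^\nu)}_0=0$; evaluating (\ref{6.4}) and (\ref{6.0}) at $n=0$ this reads ${\hat b}^\nu_1+{\hat b}^\nu_{-1}=0$ and ${\hat b}^\nu_1-{\hat b}^\nu_{-1}=0$, whence ${\hat b}^\nu_1={\hat b}^\nu_{-1}=0$.

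Third, the bound and the conclusion. By the previous two paragraphs $b^\nu={\hat b}^\nu_0+h^\nu$ with $\|h^\nu\|_{C^0({\mathbb S})}\le C_0$ and ${\hat b}^\nu_0>0$. If ${\hat b}^\nu_0>C_0$, then $b^\nu\ge{\hat b}^\nu_0-C_0>0$ pointwise, so $1=\frac1{2\pi}\int_0^{2\pi}d\theta/b^\nu\le({\hat b}^\nu_0-C_0)^{-1}$, giving ${\hat b}^\nu_0\le C_0+1$; hence in all cases $0<{\hat b}^\nu_0\le C_0+1$. Together with ${\hat b}^\nu_1=0$ and $\sum_{|n|\ge2}(1+|n|^3)|{\hat b}^\nu_n|^2\le 8M_1$, this gives $\|b^\nu\|_{H^{3/2}({\mathbb S})}^2\le (C_0+1)^2+8M_1$, a bound independent of $\nu$. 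Thus the whole sequence $(a^\nu)$, with the representatives $(b^\nu)$, already realizes the assertion, and Rellich's theorem (the embedding $H^{3/2}({\mathbb S})\hookrightarrow H^s({\mathbb S})$ is compact for $s<3/2$) provides, for each $s<3/2$, a subsequence of $(b^\nu)$ converging in $H^s({\mathbb S})$.

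The only genuinely non-formal step is the second paragraph: justifying that one may minimize ${\hat b}_0$ over the conformal orbit — which is exactly where the hypothesis $a^\nu\ge c$ enters, through properness of $\Phi\mapsto\widehat{(a^\nu\Phi)}_0$ — together with the computation that the minimizing representative has ${\hat b}_1=0$. The normalization, the $Z_1$-estimate and the final compactness step are then bookkeeping.
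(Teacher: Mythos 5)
There is nothing in the paper to compare your argument against: immediately after stating Theorem \ref{Tcomp} the authors write that the theorem is not proved, and they only sketch a conditional route through the (open) inequality $Z_2(a)\geq 0$ and the strengthened decay (\ref{14.3}). So your proposal has to stand on its own — and, having checked it step by step, I find that it does, modulo routine details. The normalization step is legitimate: by Edward's asymptotics the common spectrum determines $\frac1{2\pi}\int d\theta/a^\nu$, and since the conformal action (\ref{5.-3}) is linear in $a$, a proof for the rescaled functions transfers back (spell this out). Theorem \ref{T2.1}, Proposition \ref{P3.2} and (\ref{14.0}) then give a bound, uniform in $\nu$ and over each conformal orbit, on $\sum_{|n|\geq 2}(1+|n|^3)|{\hat b}_n|^2$ and, by Cauchy--Schwarz, on $\sum_{|n|\geq 2}|{\hat b}_n|$ — this is exactly what lets you sidestep the authors' difficulty, since you never need pointwise decay better than what the $Z_1$-identity already provides. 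The key new ingredient is the Hersch-type balancing: ${\widehat{(a\Phi)}}_0=\frac1{2\pi}\int a(u)\,|\chi'(u)|^2du$ with $\chi=\Phi^{-1}|_{\mathbb S}$, and $\frac1{2\pi}\int|\chi'|^2=\frac{1+|\Phi(0)|^2}{1-|\Phi(0)|^2}$, so the functional is proper on $G$ and a minimizer exists (positivity of each $a^\nu$ alone suffices here — the uniform bound $a^\nu\geq c$ is not actually needed, so your argument proves a slightly stronger statement). The first-order conditions are correct: using (\ref{6.4}) and (\ref{6.0}) at $n=0$, or directly $\frac{d}{d\rho}\big|_{\rho=0}{\widehat{(b\Phi_\rho)}}_0=-\frac{2}{\pi}\int b(u)\cos u\,du$ and its rotated analogue with $\sin u$, the minimizer has ${\hat b}_{\pm1}=0$; then positivity plus (\ref{2.0}) bounds ${\hat b}_0$ exactly as you say, and the rest is Rellich.

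Two places deserve a careful write-up rather than a gesture. First, the identification of $\frac{d}{dt}\big|_{t=0}{\widehat{(b\,e^{tX})}}_0$ with $({\widehat{Xb}})_0$ for $X=D,E$: this follows from Proposition \ref{P3.1} (or from the explicit Poisson-kernel formula above, which also shows the map $t\mapsto {\widehat{(b\,e^{tX})}}_0$ is smooth), and you should verify that the curves $\Phi^\nu e^{tD}$, $\Phi^\nu e^{tE}$ stay in $G$ and that the right-action property $(a\Phi)\Psi=a(\Phi\circ\Psi)$ is what converts the criticality of $\Phi^\nu$ into conditions on $b^\nu$. Second, state explicitly that conformal equivalence preserves $\int d\theta/a$, so the minimizing representative still satisfies (\ref{2.0}) — this is what makes the elementary trinomial-free bound $0<{\hat b}^\nu_0\leq C_0+1$ work. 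With these details filled in, your variational choice of representative (minimizing ${\hat b}_0$ over the orbit and thereby forcing ${\hat b}_{\pm1}=0$) replaces the compactness mechanism the authors hoped to extract from $Z_2$, and uses only results actually established in the paper ($Z_1$ as a spectral and conformal invariant); it would settle the theorem the paper leaves open, so it merits a very careful independent check, but I see no gap.
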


The theorem is not proved yet. In our approach to the proof, the main difficulty relates to estimate (\ref{14.2}). We can prove the theorem if, instead of (\ref{14.2}), the following stronger estimate holds:
\begin{equation}
|{\hat a}^\nu_n|\leq C|n|^{-3/2-\varepsilon}\quad\mbox{for}\quad |n|\geq2\quad(C\ \mbox{is independent of}\ \nu),
                          \label{14.3}
\end{equation}
where $\varepsilon>0$ can be arbitrary.

The possibility of proving estimates like (\ref{14.3}) closely relates to Problem \ref{P14.1} (more precisely, to the inequality  $Z_2(a)\geq0$). Let us briefly explain the relation. For the sake of simplicity, let us consider a real function $a$ with the real first Fourier coefficient ${\hat a}_1={\hat a}_{-1}$. As one can easily see, $Z_2(a)$ is a second degree polynomial in the variables $({\hat a}_0,{\hat a}_1)$. Indeed, $Z_{ijk\ell}=0$ if three elements of the quadruple $(i,j,k,\ell)$ belong to the set $\{0,1,-1\}$; this easily follows from definition (\ref{2.4}).
In particular, setting ${\hat a}_1=\kappa{\hat a}_0$, we can write the second zeta-invariant as a quadratic trinomial in the variable ${\hat a}_0$
\begin{equation}
Z_2(a)=A_\kappa({\hat a}_2,{\hat a}_3,\dots){\hat a}_0^2+2B_\kappa({\hat a}_2,{\hat a}_3,\dots){\hat a}_0+N({\hat a}_2,{\hat a}_3,\dots).
                              \label{14.4}
\end{equation}
The first coefficient of the trinomial can be easily found with the help of Theorem \ref{T7.1}
\begin{equation}
\begin{aligned}
\frac{5}{4}A_\kappa({\hat a}_2,{\hat a}_3,\dots)&=
(1+2\kappa^2)\sum\limits_{n\geq2}n(n^2-1)(n^2-2/3)|{\hat a}_n|^2\\
&+2\kappa\sum\limits_{n\geq2}n(n^2-1)(n+2)(n+1/2)({\hat a}_n\overline{{\hat a}_{n+1}}+{\hat a}_{n+1}\overline{{\hat a}_{n}})\\
&+\kappa^2\sum\limits_{n\geq2}n(n^2-1)(n+2)(n+3)({\hat a}_n\overline{{\hat a}_{n+2}}+{\hat a}_{n+2}\overline{{\hat a}_{n}}).
\end{aligned}
                              \label{14.5}
\end{equation}
Assuming the inequality $Z_2(a)\geq0$ to be valid for every real function $a$, we write (\ref{14.4}) in the form
\begin{equation}
A_\kappa({\hat a}_2,{\hat a}_3,\dots){\hat a}_0^2+2B_\kappa({\hat a}_2,{\hat a}_3,\dots){\hat a}_0+N({\hat a}_2,{\hat a}_3,\dots)=Z_2(a)\geq0.
                              \label{14.6}
\end{equation}
Since ${\hat a}_0$ is arbitrary, this implies the positive definiteness of the Hermitian form $A_\kappa({\hat a}_2,{\hat a}_3,\dots)$ for every $\kappa$. Moreover, (\ref{14.6}) can be rewritten in the form
$$
\begin{aligned}
\Big[\Big(A_\kappa({\hat a}_2,{\hat a}_3,\dots)-\delta\sum\limits_{n\geq2}|n|^5\,|{\hat a}_n|^2\Big){\hat a}_0^2&+2B_\kappa({\hat a}_2,{\hat a}_3,\dots){\hat a}_0+N({\hat a}_2,{\hat a}_3,\dots)\Big]\\
&+\delta\sum\limits_{n\geq2}|n|^5\,|{\hat a}_n|^2=Z_2(a).
\end{aligned}
$$
As is seen from (\ref{14.5}), the structure of the expression in the brackets is very similar to that of the left-hand side of (\ref{14.6}). If we had proven the inequality $Z_2(a)\geq0$ for a real $a$, then, probably, similar arguments would allow us to prove the non-negativeness of the expression in the brackets, at least for a sufficiently small $\delta>0$. If so, the last inequality gives under assumptions of Theorem \ref{Tcomp}
$$
\delta\sum\limits_{n\geq2}|n|^5\,|{\hat a}^\nu_n|^2\leq Z_2(a^\nu)=\mbox{const}
$$
This implies estimate (\ref{14.3}) with $\varepsilon=1$.
Concluding the discussion, we repeat again: compactness theorems for the Steklov spectrum are closely related to Problem \ref{P14.1}.


\end{document}